\documentclass[11pt,reqno]{amsart}
\usepackage{iftex}
\ifXeTeX
  \usepackage{fontspec}
\else
  \usepackage[utf8]{inputenc}
  \usepackage[T2A]{fontenc}
\fi
\usepackage[russian,english]{babel}

\usepackage{caption}
\usepackage{amsmath}
\usepackage{amssymb}
\usepackage{amsfonts}
\usepackage{graphicx}
\usepackage{xcolor}
\usepackage{longtable}
\usepackage{array}
\usepackage{booktabs}
\usepackage{lastpage}
\usepackage[colorlinks]{hyperref}
\usepackage{tikz}

\usepackage{titlesec}
\titlespacing{\section}{0pt}{2.5ex}{1.5ex}
\titlespacing{\subsection}{0pt}{1.5ex}{1ex}
\titlespacing{\subsubsection}{0pt}{1.5ex}{1ex}
\titleformat{\section}{\large\bfseries\centering}{\thesection}{1em}{}
\titleformat{\subsection}[runin]{\bfseries}{\thesubsection.}{0.5em}{}[.\mbox{\ }]
\titleformat{\subsubsection}[runin]{\bfseries}{\thesubsubsection.}{0.4em}{}[.\mbox{\ }]

\newsavebox{\fitboxbox}
\newenvironment{fitbox}{\begin{lrbox}{\fitboxbox}}{\end{lrbox}%
  \ifdim\wd\fitboxbox>\linewidth\resizebox{\linewidth}{!}{\usebox{\fitboxbox}}\else\usebox{\fitboxbox}\fi}

\newcommand{\Z}{\mathbb{Z}}
\newcommand{\Cay}{\operatorname{Cay}}

\newcommand\orcidicon[1]{\href{https://orcid.org/#1}{\includegraphics[scale=0.02]{orcid.pdf}}}

\def\firstpage{1} 
\newtheorem{lemma}{Lemma}
\newtheorem{theorem}{Theorem}
\newtheorem{corollary}{Corollary}
\newtheorem{proposition}{Proposition}
\theoremstyle{definition}

\theoremstyle{remark}
\newtheorem{remark}{Remark}

\hypersetup{linkcolor=blue,filecolor=black,urlcolor=blue, citecolor=blue}

\begin{document}
\renewcommand{\refname}{References}
\renewcommand{\proofname}{Proof}
\renewcommand{\figurename}{Fig.}

\thispagestyle{empty}

\title[CRCs in four-dimensional lattice graphs]{\large Completely regular codes in four-dimensional lattice graphs}

\author[I.YU. MOGILNYKH]{{\bf I.Yu. Mogilnykh}}

\address{Ivan Yurevich Mogilnykh
\newline\hphantom{iii} Sobolev Institute of Mathematics,
\newline\hphantom{iii} pr. Koptyuga, 4,
\newline\hphantom{iii} 630090, Novosibirsk, Russia}%
\email{\textcolor{blue}{ivmog84@gmail.com}}%

\thanks{\sc Mogilnykh, I.Yu., Completely regular codes in four-dimensional lattice graphs.}
\thanks{\copyright \ 2026 Mogilnykh I.Yu.}
\thanks{\rm The work was carried out within the state assignment of the Sobolev Institute of Mathematics SB RAS, project no. FWNF-2026-0011.}
\thanks{\it Received January, 1, 2026, Published December, 31, 2026}%

\vspace{1cm}
\maketitle {\small

\vspace{-12pt}

\bigskip
\bigskip

\begin{quote}
\noindent{\bf Abstract:} We obtain series of new completely regular codes in Cayley graphs of the groups $\Z_q^n$ with the Lee metric. Combined with binary linear programming methods, the constructions lead to enumeration of all completely regular codes in such graphs for the groups $\Z_q^4$, $q\leq 7$ up to parameter matrices.
 \medskip

\noindent{\bf Keywords:} completely regular code, perfect coloring, covering, bipartite graph, infinite lattice, Lee metric, Hamming metric.
 \end{quote}
}

\bigskip

\section{Introduction}

 The undirected {\it Cayley graph} $\Cay(G,S)$ of an abelian group $G$  with a generating set $S$ such that $0\notin S$, $-S=S$, is the graph with vertex set $G$ whose edges are the pairs $\{x,x+a\}$, $x\in G$, $a\in S$. The graphs considered in this paper are Cayley graphs  of abelian groups. By $e^i=(0,\ldots,0,1,0,\ldots,0)$ we denote the word with the one in the $i$th position and all other coordinates equal to zero. Below we define the classes of graphs used in the paper.
\begin{itemize}
\item $H(n,q)=\Cay(\Z_q^n,\{\alpha e^i:\ 1\le i\le n,\ \alpha\in\Z_q\setminus\{0\}\})$ is the Hamming graph;
\item $G_q^n=\Cay(\Z_q^n,\{\pm e^i:\ 1\le i\le n\})$ and $G^n=\Cay(\Z^n,\{\pm e^i:\ 1\le i\le n\})$ are the lattices with the Lee metric ($l_1$-metric);
\item $D_q^n=\Cay(\Z_q^n,\{0,1,-1\}^n\setminus\{0^n\})$ and $D^n=\Cay(\Z^n,\{0,1,-1\}^n\setminus\{0^n\})$ are the lattices with the $l_\infty$-metric.
\end{itemize}
For $q\ge3$ the graph $G_q^n$ is regular of degree $2n$, and the graph $D_q^n$ is regular of degree $3^n-1$; $G_4^n$ is isomorphic to $H(2n,2)$ (the cycle $G_4$ is the square $H(2,2)$), $G_3^n=H(n,3)$, $G_2^n=H(n,2)$. For even $q$ the graph $G_q^n$ is bipartite: the parts are formed by the words with even and with odd Lee weight.

The notion of a completely regular code arose in the 1970s
in connection with the solution of the problem of the existence of perfect codes
with arbitrary covering radius in the Hamming metric
over field alphabets \cite{Z1},\cite{Z2},\cite{T}.
Perfect codes in the Lee metric have been studied since
the seminal paper of Golomb and Welch \cite{GW70}. One of the classical problems for completely regular codes
is the classification of their parameters in a given family of graphs.
In particular, classifications of the parameters of all
completely regular codes (CRCs) in the Hamming graphs
$H(n,2)$ were obtained ~\cite[Section~7]{CRC} for $n\leq 13$ (in $G_{4}^n$, $n\leq 6$ respectively)
 and in the Hamming graphs $H(n,3)$ for $n\leq 6$.

For the Lee metric graphs, the problem of classifying CRCs
has been considered in several papers. In the graph of the infinite square
lattice $G^2$, the case of covering radius~$1$ was studied
in \cite{Ax03}, and the case of covering radius~$2$
by Puzynina in \cite{Puz05}. For covering radius~$1$,
several restrictions on the parameter matrix were obtained in \cite{Dobrec}.
A complete classification of CRCs
of arbitrary covering radius in $G^2$ was obtained
in \cite{ASV12}. Thus, the first open
cases are the lattices $G^3$ and $G^4$.

In \cite{MV_Manh}, CRCs in $G^n$
whose first two diagonal elements of the parameter matrix
are equal to zero were studied. In particular, all such codes
in the graph $G^3$ were enumerated. We also mention an important paper \cite{AvgVas22},
in which, for CRCs in the graphs $G_q^n$ and $G^n$,
the monotonicity of the superdiagonal and subdiagonal
elements of the parameter matrix was established, as well as the upper bound~$2n$
on the covering radius.

The main definitions concerning completely regular codes and the basic constructions are given in Section~\ref{S2}. In Section~\ref{S3} we propose new series of CRCs with covering radius $2$ in the graphs $G_{6r}^{\frac{3^l-1}{2}}$ (two constructions) and $G_{6r}^{2^l}$ for any $l$, $r\geq 1$, and in the graph $G_{q}^{2m+1}$ for even $q\geq 4$ if there exists a perfect code in $G_q^m$. Mostly, the constructions are based on the composite nature of the alphabet size $6$ and combine binary or ternary perfect codes with special symbol functions or with classical methods such as the Solov'eva construction \cite{Sol}. Moreover, the new codes are not obtained from binary and ternary codes of small length in the Hamming metric by an inverse homomorphism, which follows from the enumeration of the parameters of these CRCs \cite[Section 7]{CRC}. In the final Section~\ref{S4} we list
 the parameter matrices of all CRCs in $G_q^n$, $n\leq 4$, $q\leq 7$, obtained using binary linear programming; each matrix is confirmed by a construction of a CRC.

\section{Basic definitions and results}\label{S2}
\subsection{Perfect colorings and codes in graphs}

\paragraph{Perfect colorings and CRCs} A partition $({\mathcal C}_0,\ldots,{\mathcal C}_{k-1})$ of the vertex set of a graph is called a \emph{perfect coloring} with \emph{parameter matrix} $M=(M_{ij})_{i,j=0}^{k-1}$ if every vertex of ${\mathcal C}_i$ has exactly $M_{ij}$ neighbors in ${\mathcal C}_j$. For a code (a nonempty set of vertices) ${\mathcal C}$ we put ${\mathcal C}_i$ as the set of vertices at distance $i$ from ${\mathcal C}$ in the graph. The number $\rho=\max\{i: {\mathcal C}_i\neq\varnothing\}$ is the \emph{covering radius} of the code ${\mathcal C}$. A code ${\mathcal C}$ is called \emph{completely regular} (briefly, a CRC) if the partition $({\mathcal C}_0,\ldots,{\mathcal C}_\rho)$ is a perfect coloring. The parameter matrix of a CRC is tridiagonal (i.e. $M_{ij}=0$ for $|i-j|\ge2$) and if the graph is $\delta$-regular, it is recovered from the \emph{intersection array}
\[
[b_0,\ldots,b_{\rho-1};\,c_1,\ldots,c_\rho],\qquad b_i=M_{i,i+1},\quad c_i=M_{i,i-1},\quad M_{ii}=\delta-b_i-c_i.
\]

The intersection array is mostly used  for the compact notation of CRCs in the tables of Section~4 and in the rest of the text CRCs are represented by their parameter matrices $M$. A classic example of CRC is a one-vertex code in a distance-regular graph and its intersection array coincides with the intersection array of the graph. For a one-vertex code in the Hamming graph $H(n,q)$ we have $c_i=i$, $1\le i\le n$, $b_i=(n-i)(q-1)$, $0\le i\le n-1$.

{\it A perfect code} (one-error-correcting perfect code) in a $\delta$-regular graph is a CRC with intersection array $[\delta;\,1]$, i.e. with the matrix $\begin{pmatrix}0&\delta\\1&\delta-1\end{pmatrix}$; thus, for $n=1$ the singleton in $H(1,q)=K_q$ from the example above is a perfect code with the matrix $\begin{pmatrix}0&q-1\\1&q-2\end{pmatrix}$. Such codes exist in $G^n$ for all $n\ge1$ \cite{GW70}. More generally, for $1\le k\le\delta$ a \emph{$k$-fold perfect code} in a $\delta$-regular graph is a CRC with parameter matrix $\begin{pmatrix}k-1&\delta+1-k\\k&\delta-k\end{pmatrix}$, i.e. a code meeting every ball of radius $1$ in exactly $k$ vertices; a $1$-fold perfect code is a perfect code.
\paragraph{Linear codes.} A code ${\mathcal C}\subseteq\Z_q^n$ is called \emph{linear} if it is a subgroup of the group $\Z_q^n$. Every linear code is given by a \emph{parity-check matrix}, i.e. a matrix $H$ over $\Z_q$ with columns $h_1,\ldots,h_n$ such that ${\mathcal C}=\{x\in\Z_q^n:\ Hx^\top=0\}$. For Theorems~\ref{T3}--\ref{T4}, in the case when all input codes are linear, we briefly indicate the description of the resulting code in terms of the parity-check matrices of the input codes. We note that there are also nonlinear perfect codes in Lee metric \cite{Et11},  \cite[Remarks 2, 4]{MS23}.

\subsection{Coverings}\label{SS:cover}
A surjective map $f$ from the vertices of a graph $\Gamma'$ onto the vertices of a graph $\Gamma$ is called a $\lambda$-fold {\it covering} if for every vertex $v'$ of $\Gamma'$ the map $f$ sends the neighbors of $v'$ to neighbors of $f(v')$ and every neighbor of $f(v')$ has exactly $\lambda$ preimages among the neighbors of $v'$ (for $\lambda=1$ this is the usual notion of a covering). For Cayley graphs, the homomorphisms described in Proposition~\ref{P:cover} are $\lambda$-fold coverings.

\begin{proposition}\label{P:cover} \cite[Lemma 4.1]{BKMTV21}
Let $\phi:G'\to G$ be a surjective  homomorphism of a group $G'$ to a group $G$, and let $S'\subseteq G'$ and $S\subseteq G$ be such that $0\notin S,S'$, $-S=S$, $-S'=S'$, $\phi(S')=S$ and every element of $S$ has exactly $\lambda$, $\lambda \ge1$, preimages in $S'$. If ${\mathcal C}$ is a CRC in $\Cay(G,S)$ with parameter matrix $M$, then $\phi^{-1}({\mathcal C})$ is a CRC in $\Cay(G',S')$ with parameter matrix $\lambda M$.
\end{proposition}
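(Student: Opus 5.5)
The plan is to show directly that the preimage partition $(\phi^{-1}({\mathcal C}_0),\ldots,\phi^{-1}({\mathcal C}_\rho))$ of $G'$ is a perfect coloring of $\Cay(G',S')$ with matrix $\lambda M$. After that we identify it with the distance partition of ${\mathcal C}'=\phi^{-1}({\mathcal C})$. Here ${\mathcal C}_i$ denotes the set of vertices of $\Cay(G,S)$ at distance $i$ from ${\mathcal C}$.

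\emph{Step 1 (local count).} Take $x'\in G'$ with $\phi(x')=x\in{\mathcal C}_i$. The neighbors of $x'$ are $x'+s'$, $s'\in S'$. Their images are $x+\phi(s')$, and $\phi(s')\in S$ since $\phi(S')=S$. As $s'$ runs over $S'$, each $s\in S$ arises exactly $\lambda$ times. Hence the neighbors of $x'$ lying in $\phi^{-1}({\mathcal C}_j)$ correspond $\lambda$-to-one to the neighbors $x+s$ of $x$ in ${\mathcal C}_j$. So their number is $\lambda M_{ij}$. Two remarks are needed here. First, distinct $s'$ give distinct neighbors $x'+s'$, so we count with multiplicity correctly; the Cayley graph is simple and $S'$ is a set. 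Second, $0\notin S'$ is used so that $x'+s'\neq x'$. This shows that the preimage partition is a perfect coloring with matrix $\lambda M$. Surjectivity of $\phi$ ensures that every cell $\phi^{-1}({\mathcal C}_i)$ is nonempty.

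\emph{Step 2 (distance identification).} This is the step I expect to need care, though it is short. I would prove by induction on $i$ that $\phi^{-1}({\mathcal C}_i)$ is exactly the set of vertices at distance $i$ from ${\mathcal C}'$.
\begin{itemize}
\item Upper bound: $\phi$ maps edges to edges, so the distance from $x'$ to ${\mathcal C}'$ is at least $d(\phi(x'),{\mathcal C})$.
\item Lower bound: let $x'\in\phi^{-1}({\mathcal C}_i)$ with $i\ge1$. Since $c_i=M_{i,i-1}\ge1$, $x$ has a neighbor in ${\mathcal C}_{i-1}$. By Step 1, $x'$ has $\lambda c_i\ge1$ neighbors in $\phi^{-1}({\mathcal C}_{i-1})$, which are at distance $i-1$ from ${\mathcal C}'$ by induction. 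So the distance from $x'$ to ${\mathcal C}'$ is at most $i$.
\end{itemize}
The positivity of $c_i$ holds because ${\mathcal C}_i$ is defined by distance, so each vertex in ${\mathcal C}_i$ has a neighbor in ${\mathcal C}_{i-1}$.

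\emph{Conclusion.} Combining the two steps, the distance partition of ${\mathcal C}'$ is the preimage partition. Its covering radius is $\rho$, and it is perfect with matrix $\lambda M$. Hence ${\mathcal C}'$ is a CRC with parameter matrix $\lambda M$. In particular, $\phi$ is a $\lambda$-fold covering in the sense of Subsection~\ref{SS:cover}.
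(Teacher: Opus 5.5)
Your proof is correct. The paper gives no proof of its own: it cites the lemma and remarks only that such homomorphisms are $\lambda$-fold coverings, which is exactly your Step~1, and your Step~2 then correctly identifies the pulled-back partition with the distance partition of $\phi^{-1}({\mathcal C})$. The one slip is cosmetic: in Step~2 the bullet labels are swapped, since the first bullet shows $d(x',{\mathcal C}')\ge i$ and the second shows $d(x',{\mathcal C}')\le i$.
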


 A homomorphism $\Z_q^n\to\Z_q^m$ has the form $x\mapsto xT$ for an $n\times m$ matrix $T$; it is surjective, for example, if some minor of order $m$ of the matrix $T$ is coprime to $q$. Let $\phi_q:\Z_{qr}^n\to\Z_q^n$ (or $\Z^n\to\Z_q^n$) be the coordinatewise reduction modulo $q$:
 $$\phi_q(x_1,\ldots,x_n)=(x_1\mbox{ mod }q,\ldots,x_n\mbox{ mod }q). $$ Consider several basic constructions for CRCs arising from group homomorphisms (see also \cite{MV_Manh}).

\noindent\textbf{ K1.} Let $r$ be any integer, $r\ge2$ and ${\mathcal C}$ be a CRC in $G_2^n=H(n,2)$ with parameter matrix $M$. Then $\phi_2^{-1}({\mathcal C})$ is a CRC in $G_{2r}^n$ (respectively, in $G^n$) with matrix $2M$ (here $\lambda=2$, since $e^i=-e^i$ in $\Z_2^n$).

\noindent\textbf{ K2.} Let $r$ be any integer, $r\ge1$, $q\ge3$ and ${\mathcal C}$ be a CRC in $G_q^n$ with parameter matrix $M$. Then $\phi_q^{-1}({\mathcal C})$ is a CRC in $G_{qr}^n$ (respectively, in $G^n$) with matrix $M$ (here $\lambda=1$).

\noindent\textbf{ K3.} Let $q$ be odd, $N=\frac{m(q-1)}{2}$, and let the rows of the $N\times m$ matrix $T$ be all the words $\alpha e^j$, $1\le j\le m$, $1\le\alpha\le\frac{q-1}{2}$. Then $\phi(x)=xT$ maps $\{\pm e^i:1\le i\le N\}$ bijectively onto $\{\alpha e^j:\ 1\le j\le m,\ \alpha\in\Z_q\setminus\{0\}\}$. Consequently, if ${\mathcal C}$ is a CRC in $H(m,q)$ with matrix $M$, then $\phi^{-1}({\mathcal C})$ is a CRC in $G_q^{N}$ with matrix $M$.

\noindent\textbf{ K4.} Let $q\ge3$, $N=\frac{3^m-1}{2}$, and let the rows of the $N\times m$ matrix $T$ be all the words of length $m$ over $\{0,1,-1\}$ whose first nonzero symbol is $1$. Then $\phi(x)=xT$ maps $\{\pm e^i:1\le i\le N\}$ bijectively onto $\{0,1,-1\}^m\setminus\{0^m\}$. Consequently, if ${\mathcal C}$ is a CRC in $D_q^m$ with matrix $M$, then $\phi^{-1}({\mathcal C})$ is a CRC in $G_q^{N}$ with matrix $M$.

\subsection{Basic CRCs, addition and multiplication}\label{SS:simple}
We regard the following codes as the basic, simplest sources of CRCs.
\begin{itemize}
\item A one-vertex code in a distance-regular graph, for example in the cycle $G_q$ or in a Hamming graph;
\item Perfect codes in the Hamming metric and in the Lee metric. In particular, a binary perfect code of length $n$ in $H(n,2)$ exists if and only if $n=2^m-1$, $m\ge1$ (necessity follows from the Hamming bound, sufficiency from the Hamming codes). If $(2n+1)\mid q$, then the code $\{x\in\Z_q^n:\ \sum_{i=1}^n i x_i\equiv 0 \pmod{2n+1}\}$ is perfect in $G_q^n$ \cite{AHM}.
\item Extended binary perfect codes. Appending a parity check to each codeword gives an extended perfect code of length $n=2^m$ and cardinality $2^{n-m-1}$ with minimum distance $4$ (for $m\ge2$). For $m\ge2$ it is known \cite[Proposition 1.38]{CRC} that this code is a CRC in $H(n,2)=G_2^n$ with covering radius $\rho=2$ and parameter matrix
\[
M=\begin{pmatrix}
0&n&0\\
1&0&n-1\\
0&n&0
\end{pmatrix}.
\]
 For example, for $m=2$ the extended Hamming code $\{0000,1111\}$ is a CRC in $H(4,2)=G_2^4$ with array $[4,3;\,1,4]$, and for $m=3$ the extended perfect code ${\mathcal C}$ of length $8$ is a CRC in $H(8,2)=G_2^8$ with array $[8,7;\,1,8]$. By Construction~K1, for $r\ge2$ the preimage $\phi_2^{-1}({\mathcal C})$ is a CRC in $G_{2r}^n$ (and in $G^n$) with matrix $2M$, i.e. with array $[2n,\,2n-2;\,2,\,2n]$; for example, for $n=4$ we obtain a CRC in $G_6^4$ with array $[8,6;\,2,8]$.

 Note that recently, in \cite{K2CRC}, a criterion for the extension of CRC code  to be CRC was obtained in terms of the parameter matrices in a wide family of bipartite graphs, including the Hamming graphs over binary alphabets and the Lee graphs over alphabets of even size.

\item For even $q\ge4$ each part of the graph $G_q^n$ is a CRC with matrix $\begin{pmatrix}0&2n\\2n&0\end{pmatrix}$ (the \emph{even-weight code});
\item Even-weight subcodes of perfect codes. Let $P$ be a perfect code in a  bipartite $\delta$-regular graph with parts $V_0,V_1$ that meets both parts. Then (see e.g., \cite[Proposition 1.32]{CRC}) $P\cap V_0$  is a CRC with parameter matrix
\[
\begin{pmatrix}
0&\delta&0&0\\
1&0&\delta-1&0\\
0&\delta-1&0&1\\
0&0&\delta&0
\end{pmatrix}.
\]

\item $k$-fold perfect codes are CRCs with parameter matrix \[\begin{pmatrix}
k-1&\delta-k+1\\
k&\delta-k\\
\end{pmatrix}.
\]In particular, if $P$ is a perfect code in a Cayley graph $\Cay(G,S)$, $|S|=\delta$, that is a subgroup of the group $G$, and the group $G$ is abelian, then the union of any $k$ cosets of $P$, $1\le k\le\delta$, is a $k$-fold perfect code. As indicated at the beginning of this section, if $(2n+1)\mid q$, then there exists a perfect code of the graph $G_q^n$ that is a subgroup of $\Z_q^n$, and therefore for every $k$, $1\le k\le 2n$, there exists a $k$-fold perfect code in $G_q^n$.

\end{itemize}
The following two operations, see \cite{FDF} provide new CRCs from existing ones. Let $q\ge3$ and ${\mathcal C}$ be a CRC in $G_q^n$ with parameter matrix $M$.
\begin{itemize}
\item \emph{Addition} ($+2k$): ${\mathcal C}\times\Z_q^k$ is a CRC in $G_q^{n+k}$ with matrix $M+2kI$, where $I$ is the identity matrix.
\item \emph{Multiplication} ($*k$): the preimage of ${\mathcal C}$ under the homomorphism $\phi:\Z_q^{kn}\to\Z_q^n$,
\[
\phi(x_1,\ldots,x_{kn})=\Bigl(\sum_{t=0}^{k-1} x_{1+tn},\ \sum_{t=0}^{k-1} x_{2+tn},\ \ldots,\ \sum_{t=0}^{k-1}x_{n+tn}\Bigr),
\]
is a CRC in $G_q^{kn}$ with matrix $kM$.
\end{itemize}
In the tables of Section~4, the notation $G_q^n+2k$ means a CRC in $G_q^{n+k}$ obtained by addition from a CRC in $G_q^n$, and the notation $G_q^n*k$ means a CRC obtained by multiplication by $k$ from a CRC in $G_q^n$; the operations can be combined, for example $G_q^n*k+2l$. In all cases $G_q^n$ is the graph in which the original code is constructed (the construction of the original code is indicated in the table for $G_q^n$; its intersection array does not change under addition, and is multiplied by $k$ under multiplication).
\subsection{A criterion for CRCs with covering radius 2}
All the new codes of Section~3 are obtained using the following simple criterion.
\begin{proposition}\label{P1}
Let $\Gamma$ be a $\delta$-regular graph, and let $T={\mathcal C}\cup\widetilde{{\mathcal C}}$, where ${\mathcal C}$ and $\widetilde {\mathcal C}$ are nonempty, disjoint, and there are no edges between ${\mathcal C}$ and $\widetilde{{\mathcal C}}$. Suppose that every vertex of $T$ is adjacent to exactly $A$, $A<\delta$, vertices of $T$, and every vertex of $\overline{T}$ is adjacent to exactly $C$, $C\ge1$, vertices of ${\mathcal C}$ and to exactly $B$ vertices of $\widetilde{\mathcal C}$. Then the following holds.

1. We have ${\mathcal C}_1=\overline{T}$, ${\mathcal C}_2=\widetilde{\mathcal C}$, and ${\mathcal C}$ is a CRC with parameter matrix
\[
M=\begin{pmatrix}
A& \delta-A&0 \\
C& \delta-B-C &B\\
0&\delta-A&A
\end{pmatrix}.
\]
2. The code $T$ is a CRC in $\Gamma$ with parameter matrix \[
M=\begin{pmatrix}
A& \delta-A \\
B+C& \delta-B-C \\
\end{pmatrix}.
\]

\end{proposition}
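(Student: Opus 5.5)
The plan is to compute the distance partition of ${\mathcal C}$ directly from the hypotheses and then read off the neighbour counts.

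\emph{Distance partition.} First we show ${\mathcal C}_1=\overline{T}$, ${\mathcal C}_2=\widetilde{\mathcal C}$, and that nothing lies at distance $\ge 3$.
\begin{itemize}
\item Every vertex of $\overline{T}$ has $C\ge1$ neighbours in ${\mathcal C}$, so $\overline{T}\subseteq{\mathcal C}_1$.
\item A vertex of $\widetilde{\mathcal C}$ has no neighbour in ${\mathcal C}$, since there are no edges between ${\mathcal C}$ and $\widetilde{\mathcal C}$. Hence ${\mathcal C}_1\subseteq\overline{T}$, and so ${\mathcal C}_1=\overline{T}$.
\item A vertex $v\in\widetilde{\mathcal C}$ has exactly $A$ neighbours in $T$. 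None of them lie in ${\mathcal C}$, so all $A$ lie in $\widetilde{\mathcal C}$.
\item Because $A<\delta$, the vertex $v$ has $\delta-A\ge1$ neighbours outside $T$, that is, in $\overline{T}={\mathcal C}_1$. Thus $v\in{\mathcal C}_2$.
\end{itemize}
The vertex set is the disjoint union ${\mathcal C}\cup\overline{T}\cup\widetilde{\mathcal C}$, and $\widetilde{\mathcal C}\ne\varnothing$. Therefore ${\mathcal C}_2=\widetilde{\mathcal C}$ and the covering radius is exactly $2$.

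\emph{Rows of the matrix for ${\mathcal C}$.} Each row follows from the hypotheses.
\begin{itemize}
\item A vertex of ${\mathcal C}$ has its $A$ neighbours in $T$ all inside ${\mathcal C}$, since there are no ${\mathcal C}$–$\widetilde{\mathcal C}$ edges. Its remaining $\delta-A$ neighbours lie in ${\mathcal C}_1$, and it has none in ${\mathcal C}_2$.
\item A vertex of $\overline{T}$ has $C$ neighbours in ${\mathcal C}$ and $B$ in $\widetilde{\mathcal C}={\mathcal C}_2$. The remaining $\delta-B-C$ neighbours lie in ${\mathcal C}_1$.
\item The row for ${\mathcal C}_2$ was computed in the first step: $0$ neighbours in ${\mathcal C}$, $\delta-A$ in ${\mathcal C}_1$, and $A$ in ${\mathcal C}_2$.
\end{itemize}
This gives the matrix in part~1, so ${\mathcal C}$ is a CRC with that parameter matrix.

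\emph{The code $T$.} For part~2, note that $T$ is nonempty and $T\ne V$, because $\delta-A\ge1$ forces $\overline{T}\neq\varnothing$. Every vertex of $\overline{T}$ has $B+C\ge1$ neighbours in $T$, so $\overline{T}=T_1$ and the covering radius of $T$ is $1$. The counts are then immediate:
\begin{itemize}
\item a vertex of $T$ has $A$ neighbours in $T$ and $\delta-A$ in $\overline{T}$;
\item a vertex of $\overline{T}$ has $B+C$ neighbours in $T$ and $\delta-B-C$ in $\overline{T}$.
\end{itemize}
This is the matrix in part~2.

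The only step needing care is identifying $\widetilde{\mathcal C}$ as exactly ${\mathcal C}_2$. This is where both $A<\delta$ and the absence of edges between ${\mathcal C}$ and $\widetilde{\mathcal C}$ are used; everything else is bookkeeping.
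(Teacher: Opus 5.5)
Your proof is correct and follows essentially the same route as the paper. You use $C\ge1$ to get $\overline{T}={\mathcal C}_1$, then the absence of ${\mathcal C}$--$\widetilde{\mathcal C}$ edges together with $\delta-A\ge1$ to place $\widetilde{\mathcal C}$ at distance exactly $2$, and read off the rows by direct counting; you are slightly more explicit than the paper about ${\mathcal C}_1\subseteq\overline{T}$ and about $\overline{T}\neq\varnothing$ for part~2.
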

\begin{proof} 1.
Since there are no edges between ${\mathcal C}$ and $\widetilde{{\mathcal C}}$, all $A$ neighbors of a vertex of ${\mathcal C}$ in $T$ are in ${\mathcal C}$, and the remaining $\delta-A\ge1$ are in $\overline{T}$; the same holds for $\widetilde {\mathcal C}$. In particular $\overline T\neq\varnothing$. Since $C\ge1$, every vertex of $\overline T$ is adjacent to ${\mathcal C}$, so $\overline{T}={\mathcal C}_1$. The vertices of $\widetilde{{\mathcal C}}$ are not adjacent to ${\mathcal C}$ and have neighbors in $\overline T={\mathcal C}_1$, hence $\widetilde{{\mathcal C}}={\mathcal C}_2$. The rows of the matrix are obtained by direct counting.

2. Follows directly from the parameters established in the previous item.

\end{proof}
\section{New constructions of CRCs over $\Z_6$ in the Lee metric}\label{S3}
Each construction of Sections~3.1--3.3 gives a code in $G_6^n$, to which K2 is then applied; therefore the resulting CRCs also exist in $G_{6r}^n$ for all $r\ge1$ and in $G^n$. The construction of Section~3.4 works for any even $q\geq 4$.
\subsection{A concatenation construction from ternary punctured perfect codes}
The general form of this construction resembles the Solov'eva concatenation construction for perfect and extended perfect codes \cite{Sol}. We say that a triple $(Q^0,Q^1,Q^2)$ of subsets of $\Z_6^m$ is \emph{admissible} if $Q^0,Q^1,Q^2$ are nonempty and pairwise disjoint, their union $S$ is an independent set of the graph $G_6^m$, and every vertex outside $S$ is adjacent to exactly one vertex of each of $Q^0,Q^1,Q^2$. This is equivalent to $S$ being a CRC with matrix $\begin{pmatrix}0&2m\\3&2m-3\end{pmatrix}$, each $Q^i$ being a CRC with matrix $\begin{pmatrix}0&2m&0\\1&2m-3&2\\0&2m&0\end{pmatrix}$, and $Q^i_2=S\setminus Q^i$. The same definition is used for subsets of $\Z_3^m$ and the graph $G_3^m=H(m,3)$.
\begin{lemma}\label{L:ternary}
For $l\ge2$ and $m=\frac{3^l-3}{2}$ there exists an admissible triple in $\Z_6^m$.
\end{lemma}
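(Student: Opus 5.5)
The plan is to build the triple first over $\Z_3$ from a punctured ternary Hamming code, and then lift it to $\Z_6^m$ by the covering $\phi_3$, as in Construction K2.

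\textbf{Ternary triple.} Put $n=\frac{3^l-1}{2}=m+1$ and let $P\subseteq\Z_3^n$ be a ternary perfect code in $H(n,3)$, for instance the ternary Hamming code. It has minimum distance $3$ and every word of $\Z_3^n$ is at distance at most $1$ from exactly one codeword. For $i\in\Z_3$ define
\[
Q^i=\{x\in\Z_3^m:\ (x,i)\in P\},
\]
i.e.\ we split $P$ by the value of its last coordinate and then puncture that coordinate. I will verify admissibility of $(Q^0,Q^1,Q^2)$ in $G_3^m=H(m,3)$ in four steps.

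\begin{enumerate}
\item \emph{Disjointness.} Suppose $x\in Q^i\cap Q^j$ with $i\ne j$. Then $(x,i)$ and $(x,j)$ are codewords at distance $1$, which is impossible.
\item \emph{Independence of $S=Q^0\cup Q^1\cup Q^2$.} If $x\in Q^i$ and $y\in Q^j$ are adjacent in $H(m,3)$, then $(x,i)$ and $(y,j)$ are codewords at distance at most $2$, which is impossible.
\item \emph{Neighbours of $x\notin S$.} For each $i$ the word $(x,i)$ is not in $P$, so it has a unique codeword neighbour $c$ in $H(n,3)$. The word $c$ cannot be of the form $(x,j)$, since $x\notin S$. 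Hence $c=(y,i)$ with $y$ adjacent to $x$, so $y\in Q^i$. If $x$ had two neighbours in $Q^i$, then $(x,i)$ would have two codeword neighbours, contradicting perfectness. So $x$ has exactly one neighbour in each $Q^i$.
\item \emph{Nonemptiness.} Since $l\ge2$ we have $m\ge3$ and $|P|=3^{n-l}$. Moreover each nonzero symbol occurs in the last coordinate of some codeword. One way to see this is that every word $(0^m,i)$ lies within distance $1$ of a codeword, and by the argument of step 3 at most one codeword has last coordinate different from $i$ in a suitable sense. It is simpler to use linearity: the Hamming code has no identically zero coordinate, so each $Q^i$ is a coset of the same subgroup of size $3^{n-l-1}\ge1$.
\end{enumerate}

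\textbf{Lifting to $\Z_6^m$.} I will reformulate admissibility as the statement that the partition $(Q^0,Q^1,Q^2,\overline S)$ is a perfect coloring of $G_3^m$ with matrix
\[
\begin{pmatrix}0&0&0&2m\\0&0&0&2m\\0&0&0&2m\\1&1&1&2m-3\end{pmatrix}.
\]
The map $\phi_3:\Z_6^m\to\Z_3^m$ is a $1$-fold covering of $G_6^m$ onto $G_3^m$ in the sense of Proposition~\ref{P:cover}, as in Construction K2. Under this covering each vertex's neighbourhood maps bijectively onto the neighbourhood of its image. Therefore the preimage of a perfect coloring is a perfect coloring with the same matrix, by the same counting argument as in Proposition~\ref{P:cover}. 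It follows that $(\phi_3^{-1}(Q^0),\phi_3^{-1}(Q^1),\phi_3^{-1}(Q^2))$ is an admissible triple in $\Z_6^m$. The three sets are nonempty, pairwise disjoint, and their union is independent, because the zero entries of the matrix are inherited.

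The only step needing care is step 3 above, namely the exact count of one neighbour in each $Q^i$. This is where perfectness of $P$ is used in full: it is needed both for existence and for uniqueness of the codeword neighbour of $(x,i)$. The remaining checks are routine.
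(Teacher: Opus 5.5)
Your proof is correct and follows the paper's route exactly: you split a ternary perfect (Hamming) code of length $m+1$ by its last coordinate, puncture that coordinate, and lift the resulting triple to $\Z_6^m$ through the covering $\phi_3$. The only difference is that you verify admissibility directly from perfectness (your steps 1--3), whereas the paper cites the known fact that shortened perfect codes are CRCs with the required parameters; note also that nonemptiness already follows from your step 3, because $\overline S\neq\varnothing$, so the unclear first justification in step 4 is not needed.
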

\begin{proof}
Let  $P$ be the ternary perfect (for example, Hamming code)  in $H(m+1,3)=G_3^{m+1}$.  Put $Q'^{i}=\{y\in\Z_3^m:\ (y,i)\in P\}$, $\alpha\in\Z_3$, and $S'=\bigcup_{i\in \Z_3} Q'^{i}$. Using code terms,  $S'$ is the punctured Hamming code,  and the sets $Q'^{i}$ are shortened codes.
 Thus $(Q'^0,Q'^1,Q'^2)$ is an admissible triple in $G_3^m$, because each shortened perfect code $Q'^{i}$, $i\in \Z_3$ is a CRC with required parameter matrix \cite{ShiWuKrotov2022}.

The triple $(Q^0,Q^1,Q^2)=(\phi^{-1}_3(Q'^0),\phi^{-1}_3(Q'^1),\phi^{-1}_3(Q'^2))$ under $\phi_3:\Z_6^m\to\Z_3^m$ is an admissible triple in $G_6^m$, since $\phi_3$ maps the neighbors of each vertex bijectively onto the neighbors of its image.
\end{proof}
\begin{theorem}\label{T}
Let $(Q^0,Q^1,Q^2)$ be an admissible triple in $\Z_6^m$, $R^i=\{2i,2i+1\}\subset\Z_6$, $i=0,1,2$. Then the code
\[
{\mathcal C}=\bigcup_{i\in \Z_3}Q^i\times R^i
\]
is a CRC in $G_6^{m+1}$ with parameter matrix
\[
\begin{pmatrix}
1& 2m+1& 0 \\
1& 2m & 1\\
0& 2m+1 & 1
\end{pmatrix}.
\]
\end{theorem}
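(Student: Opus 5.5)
We will apply Proposition~\ref{P1} in $\Gamma=G_6^{m+1}$, which has degree $\delta=2m+2$. As the companion set we take
\[
\widetilde{\mathcal C}=\bigcup_{i\in\Z_3}Q^i\times R^{i+1},
\]
with indices of $R$ taken mod $3$. Then $T={\mathcal C}\cup\widetilde{\mathcal C}=\bigcup_i Q^i\times(R^i\cup R^{i+1})$. The target matrix corresponds to $A=1$, $C=1$, $B=1$. So we must check four things: ${\mathcal C}$ and $\widetilde{\mathcal C}$ are disjoint and have no edges between them; every vertex of $T$ has exactly one neighbor in $T$; and every vertex outside $T$ has exactly one neighbor in ${\mathcal C}$ and exactly one in $\widetilde{\mathcal C}$. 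Disjointness is immediate, because $R^i\cap R^{i+1}=\varnothing$ and the $Q^i$ are disjoint.

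We classify the neighbors of a vertex $(y,a)$, with $y\in\Z_6^m$ and $a\in\Z_6$. There are $2m$ ``horizontal'' neighbors $(y\pm e^j,a)$ and two ``vertical'' neighbors $(y,a\pm1)$. The blocks $R^0,R^1,R^2$ cut the cycle $\Z_6$ into consecutive pairs. Hence $a$ has exactly one cycle-neighbor in its own block $R^{\lfloor a/2\rfloor}$, and one in an adjacent block, which is $R^{\lfloor a/2\rfloor\pm1}$ depending on the parity of $a$.

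\emph{Vertices of $T$.} Let $y\in Q^i$. Since $S$ is independent, every horizontal neighbor of $(y,a)$ has first coordinate outside $S$, so it lies outside $T$. The vertical neighbors of $(y,a)$ with $a\in R^i\cup R^{i+1}$ lie in the union of two consecutive blocks, which is a path of length $4$ in the cycle. So exactly one vertical neighbor stays in $R^i\cup R^{i+1}$, which gives $A=1$. That neighbor stays in the same block as $a$: it is the partner of $a$ in its block. For example, for $a=2i+1$ the neighbor $2i+2\in R^{i+1}$ would lie in the other block, so we check this parity carefully. This shows that the unique $T$-neighbor of a vertex of ${\mathcal C}$ is in ${\mathcal C}$, and likewise for $\widetilde{\mathcal C}$. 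Hence there are no edges between ${\mathcal C}$ and $\widetilde{\mathcal C}$.

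Here is the main obstacle. If $a=2i+1\in R^i$, then $a+1=2i+2\in R^{i+1}$, so the vertical neighbor $(y,2i+2)$ lies in $\widetilde{\mathcal C}$. This would create a ${\mathcal C}$--$\widetilde{\mathcal C}$ edge. Therefore the shift must go the other way, and we take instead
\[
\widetilde{\mathcal C}=\bigcup_i Q^i\times R^{i-1}.
\]
Then $T$ restricted to $y\in Q^i$ is $R^{i-1}\cup R^i=\{2i-2,2i-1,2i,2i+1\}$. The edge $2i-1\sim 2i$ still joins the two blocks, so this choice fails as well. We conclude that $\widetilde{\mathcal C}$ must instead be $\bigcup_i Q^i\times R^i$ with the horizontal structure switched, namely
\[
\widetilde{\mathcal C}=\bigcup_i Q^{i}\times R^{i+1}\ \text{shifted by}\ 3,
\]
or some other explicit choice. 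Concretely, we will take
\[
\widetilde{\mathcal C}=\bigcup_i Q^i\times(R^i+3).
\]
The set $R^i+3=\{2i+3,2i+4\}$ is antipodal to $R^i$ in $\Z_6$, so it is at cycle distance at least $2$ from $R^i$. Over $y\in Q^i$ we then have two disjoint edges: $A=1$, no vertical edges to ${\mathcal C}$, and no horizontal edges at all.

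\emph{Vertices outside $T$.} First let $y\notin S$. By admissibility, $y$ has exactly one neighbor $y^i\in Q^i$ for each $i$. Thus the horizontal neighbors of $(y,a)$ in ${\mathcal C}$ are the vertices $(y^i,a)$ with $a\in R^i$, and there is exactly one such $i$. Similarly the horizontal neighbors in $\widetilde{\mathcal C}$ are the $(y^i,a)$ with $a\in R^i+3$, again exactly one. Both the $R^i$ and the $R^i+3$ partition $\Z_6$, so $C=B=1$. Next let $y\in Q^i$ with $a\notin R^i\cup(R^i+3)$. The two remaining values of $a$ are adjacent to one element of $R^i$ and one of $R^i+3$, giving exactly one vertical neighbor in each of ${\mathcal C}$ and $\widetilde{\mathcal C}$; the horizontal neighbors lie outside $S$. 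So again $C=B=1$. Finally, Proposition~\ref{P1}(1) yields the stated matrix, since $\delta-A=2m+1$ and $\delta-B-C=2m$.
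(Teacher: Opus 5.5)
Your final argument, taking $\widetilde{\mathcal C}=\bigcup_i Q^i\times(R^i+3)$ and applying Proposition~\ref{P1} with $A=B=C=1$, is correct and is essentially the paper's proof: the paper's $\widetilde{R^i}=\{2i+3,2i+4\}$ is exactly your $R^i+3$, and the case analysis for vertices outside $T$ is the same. In a clean write-up, delete the abandoned attempts with $R^{i\pm1}$, which contain a false claim (for $a=2i+1$ both vertical neighbors $2i$ and $2i+2$ lie in the path $R^i\cup R^{i+1}$, not just one), and add the one-line remark that when $y\notin S$ the vertical neighbors $(y,a\pm1)$ are not in $T$.
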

\begin{proof}
Let $\widetilde{R^i}=\{2i+3,2i+4\}\subset\Z_6$ .
In the cycle $G_6$ each of the families $\{R^i:i\in \Z_3\}$ and $\{\widetilde{R^i}:i\in \Z_3\}$ partitions $\Z_6$, and for each $i$ the sets $R^i$, $\widetilde{R^i}$ and $N^i=\Z_6\setminus(R^i\cup \widetilde{R^i})$ have the following properties: a vertex of $R^i$ (of $\widetilde{R^i}$) has one neighbor in $R^i$ (in $\widetilde{R^i}$) and none in $\widetilde{R^i}$ (in $R^i$); a vertex of $N^i$ has one neighbor in $R^i$ and one in $\widetilde{R^i}$, see Fig.~\ref{fig:Bcycle} a, b. In other words, for any $i\in \Z_3$ $R^i$, $N^i$, $\widetilde{R^i}$ is the distance partition w.r.t. the CRC $R^i$ in $G_6$ with the parameter matrix \[
\begin{pmatrix}
1& 1& 0 \\
1& 0 & 1\\
0& 1 & 1
\end{pmatrix},
\]
see Fig.~\ref{fig:Bcycle} c.

We now check the conditions of Proposition~\ref{P1} for $T={\mathcal C}\cup{\widetilde{\mathcal C}}$, $\delta=2m+2$, where ${\widetilde{\mathcal C}}=\bigcup_{i\in \Z_3}Q^i\times \widetilde{R^i}$. Since the sets $Q^i$ are pairwise disjoint, for $x\in Q^i$ we have $(x,\gamma)\in T$ if and only if $\gamma\in R^i\cup\widetilde{R^i}$; in particular ${\mathcal C}\cap\widetilde{\mathcal C}=\varnothing$.

Consider a vertex $(x,\beta)\in{\mathcal C}$, $x\in Q^i$, $\beta\in R^i$. Its neighbors of the form $(x',\beta)$ are not in $T$, since $x'\notin S$ by the independence of the set $S$. By the remark at the beginning of the proof, among the neighbors $(x,\beta\pm1)$ exactly one lies in $Q^i\times R^i\subseteq{\mathcal C}$ and the other is in $Q^i\times N^i$, so not in $T$. Hence $(x,\beta)$ has exactly one neighbor in $T$, and it lies in ${\mathcal C}$. The same holds for $\widetilde{\mathcal C}$; in particular, there are no edges between ${\mathcal C}$ and $\widetilde{\mathcal C}$, and $A=1$.

Let $(x,\beta)\notin T$ and $x\in Q^i\subset S$ for some $i\in \Z_3$. Then $\beta$ belongs to $N^i=\Z_6\setminus(R^i\cup \widetilde{R^i})$. The neighbors of the vertex $(x,\beta)$ of the form $(x',\beta)$ do not lie in $T$ (again by the independence of $S$), and of the neighbors $(x,\beta\pm1)$ one lies in $Q^i\times R^i\subset {\mathcal C}$ and one in $Q^i\times \widetilde{R^i}\subset \widetilde{\mathcal C}$ by the remark at the beginning of the proof.

Let $(x,\beta)\notin T$ and $x\notin S$. Then the neighbors $(x,\beta\pm1)$ are not in $T$ because $x$ is not in $S=\bigcup_{i\in \Z_3} Q^i$. Moreover, by definition of admissible triple $(Q^0,Q^1,Q^2)$, a vertex $x$, $x\notin S$, has exactly one neighbor $x^j\in Q^j$ for each $j\in \Z_3$, these are all its neighbors in $S$, and the vertex $(x^j,\beta)$ is in $Q^j\times R^j\subset {\mathcal C}$ (in $Q^j\times \widetilde{R^j}\subset \widetilde{\mathcal C}$) if and only if $\beta\in R^j$ ($\beta\in \widetilde{ R^j}$), which holds for exactly one $j$. In both cases $C=B=1$, and the statement follows from Proposition~\ref{P1}.
\end{proof}

\begin{figure}[ht]
\centering
\begin{tikzpicture}[scale=0.73,
  vtx/.style={circle,draw=black,fill=white,inner sep=1pt,minimum size=13pt,font=\footnotesize},
  lbl/.style={font=\small}]
\colorlet{c0}{red!75!black}
\colorlet{c1}{blue!75!black}
\colorlet{c2}{green!50!black}
\begin{scope}
  \foreach \k in {0,...,5}{\coordinate (a\k) at ({120-60*\k}:1.15);}
  \draw[gray!60,thick] (a0)--(a1)--(a2)--(a3)--(a4)--(a5)--cycle;
  \foreach \i/\x/\y in {0/0/1,1/2/3,2/4/5}{
    \draw[c\i,line width=2pt] (a\x)--(a\y);
  }
  \node[lbl,c0] at (0,1.75) {$R^{0}=\{0,1\}$};
  \node[lbl,c1,anchor=west] at (0.9,-1.0) {$R^{1}=\{2,3\}$};
  \node[lbl,c2,anchor=east] at (-0.9,-1.0) {$R^{2}=\{4,5\}$};
  \foreach \k/\i in {0/0,1/0,2/1,3/1,4/2,5/2}{
    \node[vtx,draw=c\i,fill=c\i!15,very thick] at (a\k) {$\k$};
  }
  \node at (0,-2.5) {(a) $R^0,R^1,R^2$};
\end{scope}
\begin{scope}[xshift=5.3cm]
  \foreach \k in {0,...,5}{\coordinate (b\k) at ({120-60*\k}:1.15);}
  \draw[gray!60,thick] (b0)--(b1)--(b2)--(b3)--(b4)--(b5)--cycle;
  \foreach \i/\x/\y in {0/3/4,1/5/0,2/1/2}{
    \draw[c\i,line width=2pt] (b\x)--(b\y);
  }
  \node[lbl,c0] at (0,-1.75) {$\widetilde{R^{0}}=\{3,4\}$};
  \node[lbl,c1,anchor=east] at (-0.9,1.0) {$\widetilde{R^{1}}=\{5,0\}$};
  \node[lbl,c2,anchor=west] at (0.9,1.0) {$\widetilde{R^{2}}=\{1,2\}$};
  \foreach \k/\i in {0/1,1/2,2/2,3/0,4/0,5/1}{
    \node[vtx,draw=c\i,fill=c\i!15,very thick] at (b\k) {$\k$};
  }
  \node at (0,-2.5) {(b) $\widetilde{R^0},\widetilde{R^1},\widetilde{R^2}$};
\end{scope}
\begin{scope}[xshift=10.0cm]
  \colorlet{cN}{black!60}
  \foreach \k in {0,...,5}{\coordinate (d\k) at ({120-60*\k}:1.15);}
  \draw[cN,dashed] (-1.55,0.5)--(1.55,0.5);
  \draw[cN,dashed] (-1.55,-0.5)--(1.55,-0.5);
  \draw[gray!60,thick] (d0)--(d1)--(d2)--(d3)--(d4)--(d5)--cycle;
  \draw[blue!75,line width=2pt] (d0)--(d1);
  \draw[orange!80,line width=2pt] (d3)--(d4);
  \foreach \k in {0,1}{\node[vtx,draw=black,fill=blue!75,text=white] at (d\k) {$\k$};}
  \foreach \k in {2,5}{\node[vtx,draw=black,fill=white] at (d\k) {$\k$};}
  \foreach \k in {3,4}{\node[vtx,rectangle,draw=black,fill=orange!80,minimum size=12pt] at (d\k) {$\k$};}
  \node[lbl,blue!75!black] at (0,1.75) {$R^0=\{0,1\}$};
  \node[lbl,cN,anchor=west] at (1.5,0) {$N^0=\{2,5\}$};
  \node[lbl,orange!85!black] at (0,-1.75) {$\widetilde{R^0}=\{3,4\}$};
  \node at (0,-2.5) {(c) $R^0,N^0,\widetilde{R^0}$};
\end{scope}
\end{tikzpicture}
\caption{Two partitions of the graph $G_6$  (a) $R^0\cup R^1\cup R^2$, $R^i=\{2i,2i+1\}$ and (b) $\widetilde{R^0}\cup \widetilde{R^1}\cup \widetilde{R^2}$, $\widetilde{R^i}=R^i+3$; (c) the distance partition $(R^0,N^0,\widetilde{R^0})$ of $G_6$ with respect to the CRC $R^0$ with parameter matrix $\left(\begin{smallmatrix}1&1&0\\1&0&1\\0&1&1\end{smallmatrix}\right)$.}
\label{fig:Bcycle}
\end{figure}

\begin{corollary}\label{Cor:cascade}
For all $l,r\ge1$, CRCs with parameter matrix $\begin{pmatrix}
1& 3^l-2& 0 \\
1& 3^l-3 & 1\\
0& 3^l-2 & 1
\end{pmatrix}$ exist in the graphs $G_{6r}^{\frac{3^l-1}{2}}$ and $G^{\frac{3^l-1}{2}}$.
\end{corollary}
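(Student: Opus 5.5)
The plan is to combine Lemma~\ref{L:ternary}, Theorem~\ref{T} and Construction~K2, and to treat $l=1$ separately.

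For $l\ge2$, put $m=\frac{3^l-3}{2}$. Lemma~\ref{L:ternary} gives an admissible triple in $\Z_6^m$. Theorem~\ref{T} then gives a CRC in $G_6^{m+1}$, where $m+1=\frac{3^l-1}{2}$, with parameter matrix
\[
\begin{pmatrix}1&2m+1&0\\1&2m&1\\0&2m+1&1\end{pmatrix}.
\]
Since $2m+1=3^l-2$ and $2m=3^l-3$, this is exactly the claimed matrix. Construction~K2 with $q=6$ (here $\lambda=1$) transfers the code unchanged to $G_{6r}^{\frac{3^l-1}{2}}$ for every $r\ge1$ and to $G^{\frac{3^l-1}{2}}$, with the same matrix.

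For $l=1$ the dimension is $1$, and the claimed matrix becomes
\[
\begin{pmatrix}1&1&0\\1&0&1\\0&1&1\end{pmatrix}.
\]
This is the matrix of the CRC $R^0=\{0,1\}$ in the cycle $G_6$, which was established at the start of the proof of Theorem~\ref{T}. Alternatively, we may note that the empty-length case $m=0$ of Theorem~\ref{T} degenerates to this code. Applying K2 again gives the codes in $G_{6r}^1$ and $G^1$.

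The only step needing care is that the parameters match: we must check $\frac{3^l-1}{2}=m+1$ and the arithmetic of the entries. We also need that Lemma~\ref{L:ternary} covers all $l\ge2$, which it does. The $l=1$ case does not fit Lemma~\ref{L:ternary} and has to be handled by the explicit cycle code above.
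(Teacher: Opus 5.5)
Your proposal is correct and follows the paper's proof: the edge $R^0$ in $G_6$ handles $l=1$, Lemma~\ref{L:ternary} plus Theorem~\ref{T} with $m+1=\frac{3^l-1}{2}$ and $2m+1=3^l-2$ handles $l\ge2$, and K2 passes the codes to $G_{6r}$ and $G$. Drop the aside that the case $m=0$ of Theorem~\ref{T} degenerates to $R^0$: an admissible triple needs three nonempty pairwise disjoint subsets of the one-point set $\Z_6^0$, which do not exist, so the explicit cycle code really is needed for $l=1$.
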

\begin{proof}
For $l=1$ the required code in $G_6$ is the edge $R^0$, see Fig. \ref{fig:Bcycle}.c. For $l\ge2$ we apply Theorem~\ref{T} to the admissible triple from Lemma~\ref{L:ternary}: $m+1=\frac{3^l-1}{2}$, $2m+1=3^l-2$. We use K2 to obtain CRCs over  $\Z_{6r}$ and $\Z$.
\end{proof}
\subsection{CRCs in the $l_\infty$ metric}
\begin{figure}[ht]
\centering
\begin{tikzpicture}[scale=0.72,
  vtx/.style={circle,draw=black,fill=white,inner sep=0pt,minimum size=9pt},
  cvtx/.style={circle,draw=black,inner sep=0pt,minimum size=9pt}]
\begin{scope}
  \foreach \i in {0,...,5}{
    \draw[gray!55] (-0.45,\i) -- (5.45,\i);
    \draw[gray!55] (\i,-0.45) -- (\i,5.45);
  }
  \foreach \i in {0,...,4}{\foreach \j in {0,...,4}{
    \draw[gray!55] (\i,\j) -- (\i+1,\j+1);
    \draw[gray!55] (\i+1,\j) -- (\i,\j+1);
  }}
  \foreach \i in {0,...,5}{
    \foreach \d in {-1,1}{
      \draw[gray!55,dashed] (0,\i) -- (-0.45,\i+0.45*\d);
      \draw[gray!55,dashed] (5,\i) -- (5.45,\i+0.45*\d);
      \draw[gray!55,dashed] (\i,0) -- (\i+0.45*\d,-0.45);
      \draw[gray!55,dashed] (\i,5) -- (\i+0.45*\d,5.45);
    }
  }
  \foreach \dx in {-1,0,1}{\foreach \dy in {-1,0,1}{
    \draw[red,very thick] (2,2) -- (2+\dx,2+\dy);
  }}
  \foreach \i in {0,...,5}{\foreach \j in {0,...,5}{
    \node[vtx] at (\i,\j) {};
  }}
  \foreach \dx in {-1,0,1}{\foreach \dy in {-1,0,1}{
    \node[vtx,fill=red!35,draw=red] at (2+\dx,2+\dy) {};
  }}
  \node[vtx,fill=red,draw=red,minimum size=9pt] at (2,2) {};
  \foreach \i in {0,...,5}{
    \node[font=\scriptsize] at (\i,-0.85) {$\i$};
    \node[font=\scriptsize] at (-0.85,\i) {$\i$};
  }
  \node at (2.5,-2.3) {(a) };
\end{scope}
\begin{scope}[xshift=8.6cm]
  \foreach \i in {0,...,5}{
    \draw[gray!55] (-0.45,\i) -- (5.45,\i);
    \draw[gray!55] (\i,-0.45) -- (\i,5.45);
  }
  \foreach \i in {0,...,4}{\foreach \j in {0,...,4}{
    \draw[gray!55] (\i,\j) -- (\i+1,\j+1);
    \draw[gray!55] (\i+1,\j) -- (\i,\j+1);
  }}
  \foreach \i in {0,...,5}{
    \foreach \d in {-1,1}{
      \draw[gray!55,dashed] (0,\i) -- (-0.45,\i+0.45*\d);
      \draw[gray!55,dashed] (5,\i) -- (5.45,\i+0.45*\d);
      \draw[gray!55,dashed] (\i,0) -- (\i+0.45*\d,-0.45);
      \draw[gray!55,dashed] (\i,5) -- (\i+0.45*\d,5.45);
    }
  }
  \foreach \i in {0,...,5}{\foreach \j in {2,5}{
    \node[cvtx,fill=white] at (\i,\j) {};
    \node[cvtx,fill=white] at (\j,\i) {};
  }}
  \foreach \i in {0,1}{\foreach \j in {0,1}{
    \node[cvtx,fill=blue!75] at (\i,\j) {};
    \node[cvtx,fill=blue!75] at (\i+3,\j+3) {};
  }}
  \foreach \i in {0,1}{\foreach \j in {3,4}{
    \node[cvtx,fill=orange!80,rectangle,minimum size=8pt] at (\i,\j) {};
    \node[cvtx,fill=orange!80,rectangle,minimum size=8pt] at (\j,\i) {};
  }}
  \foreach \i in {0,...,5}{
    \node[font=\scriptsize] at (\i,-0.85) {$\i$};
    \node[font=\scriptsize] at (-0.85,\i) {$\i$};
  }
  \node at (2.5,-2.3) {(b) };
  \begin{scope}[xshift=6.6cm,yshift=1.2cm]
    \node[cvtx,fill=blue!75] at (0,3) {};
    \node[anchor=west,font=\small] at (0.3,3) {${\mathcal C}$ };
    \node[cvtx,fill=white] at (0,2) {};
    \node[anchor=west,font=\small] at (0.3,2) {${\mathcal C}_1$ };
    \node[cvtx,fill=orange!80,rectangle,minimum size=8pt] at (0,1) {};
    \node[anchor=west,font=\small] at (0.3,1) {${\mathcal C}_2$ };
  \end{scope}
\end{scope}
\end{tikzpicture}
\caption{(а) The adjacency in the graph $D_6^2$   (b) The CRC ${\mathcal C}=\{0,1\}^2\cup\{3,4\}^2$ in $D_6^2$  with parameter matrix $\left(\begin{smallmatrix}3&5&0\\2&4&2\\0&5&3\end{smallmatrix}\right)$.}
\label{fig:D62}
\end{figure}

By K4, every CRC in $D_6^l$ with $l_{\infty}$ metric provides a CRC with the same parameter matrix in $G_6^{\frac{3^l-1}{2}}$.
Consider the code $T=\{0,1,3,4\}^l\subset\Z_6^l$. We call the symbols $0,1$ \emph{lower} and $3,4$ \emph{upper}. Let ${\mathcal C}$ ($\widetilde{\mathcal C}$) be the set of words of $T$ with an even (odd) number of lower symbols. A neighbor $x+\varepsilon$, $\varepsilon\in\{0,\pm1\}^l\setminus\{0^l\}$, of a vertex $x$ in $D_6^l$ is obtained by changing each symbol by $0$ or $\pm1$.
\begin{theorem}\label{T2}
The code ${\mathcal C}$ is a CRC in $D_6^{l}$ with parameter matrix
\[
\begin{pmatrix}
2^l-1& 3^l-2^l &0\\
2^{l-1}& 3^l-1-2^l &2^{l-1}\\
0& 3^l-2^l &2^{l}-1
\end{pmatrix}.
\]
\end{theorem}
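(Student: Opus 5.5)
We apply Proposition~\ref{P1} to $T=\{0,1,3,4\}^l$ split as $T={\mathcal C}\cup\widetilde{\mathcal C}$, with $\delta=3^l-1$. The parameters will come out as $A=2^l-1$ and $B=C=2^{l-1}$. The whole argument is a coordinatewise analysis in the cycle $G_6$, as in the proof of Theorem~\ref{T}. In the cycle, each symbol $a\in\Z_6$ has the closed neighborhood $\{a-1,a,a+1\}$ (its ``ball''). A neighbor of $x$ in $D_6^l$ is any $y\neq x$ with $y_j\in\{x_j-1,x_j,x_j+1\}$ for every $j$. The set $\{0,1,3,4\}$ is the union of two edges $R=\{0,1\}$ and $\widetilde R=\{3,4\}$ of $G_6$, and the remaining symbols $2,5$ form the set $N$.

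\textbf{Local facts in $G_6$.} For a lower symbol $a\in R$, the ball of $a$ meets $\{0,1,3,4\}$ in exactly the two lower symbols $0,1$ and contains no upper symbol. Symmetrically, the ball of an upper symbol meets $T$'s alphabet in exactly $3,4$. For $a\in N$, the ball of $a$ meets $\{0,1,3,4\}$ in exactly one lower and one upper symbol: the ball of $2$ contains $1$ and $3$, and the ball of $5$ contains $4$ and $0$.

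\textbf{Vertices of $T$.} Take $x\in T$ and a neighbor $y\in T$. By the local fact, each $y_j$ has the same type (lower or upper) as $x_j$, and $y_j$ ranges over the $2$ symbols of that type. So there are $2^l$ such words $y$, one of which is $x$ itself, giving $2^l-1$ neighbors in $T$. All of them have the same number of lower symbols as $x$, hence the same parity, so they lie in the same part of $T$ as $x$. This shows there are no edges between ${\mathcal C}$ and $\widetilde{\mathcal C}$, and that $A=2^l-1<3^l-1$.

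\textbf{Vertices outside $T$.} Let $x\notin T$, and let $J\neq\varnothing$ be the set of coordinates with $x_j\in N$. A neighbor $y\in T$ must have:
\begin{itemize}
\item[] for $j\notin J$, $y_j$ one of the $2$ symbols of the same type as $x_j$;
\item[] for $j\in J$, $y_j$ either the unique lower or the unique upper symbol in the ball of $x_j$.
\end{itemize}
Since $y\neq x$ automatically, this gives $2^{l-|J|}\cdot 2^{|J|}=2^l$ neighbors in $T$. Their parities are decided as follows. The number of lower symbols among the coordinates outside $J$ is fixed. The choices on $J$ range over all $2^{|J|}$ lower/upper patterns, and since $J\neq\varnothing$, exactly half of these patterns have each parity. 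Hence $C=B=2^{l-1}\ge1$.

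The statement then follows from Proposition~\ref{P1}: $\delta-B-C=3^l-1-2^l$ and $\delta-A=3^l-2^l$. The only delicate point is the third local fact, namely that each symbol of $N$ sees exactly one lower and one upper symbol. This depends on the cyclic structure of $\Z_6$ (the symbol $5$ is adjacent to $0$), and is easily checked directly.
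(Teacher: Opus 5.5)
Your proof is correct and follows essentially the same route as the paper. Both apply Proposition~\ref{P1} to $T=\{0,1,3,4\}^l$. For $x\in T$, coordinatewise moves inside $T$ preserve the lower/upper type, which gives $A=2^l-1$ and no edges between ${\mathcal C}$ and $\widetilde{\mathcal C}$; for $x\notin T$, the free lower/upper choice at the coordinates equal to $2$ or $5$ splits the $2^l$ neighbors in $T$ evenly, which gives $B=C=2^{l-1}$.
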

\begin{proof}
Let $x$ be in $T$. A symbol $x_k\in\{0,1,3,4\}$ can be changed by $0$ or $\pm1$ while staying in $\{0,1,3,4\}$ in exactly two ways: by leaving it unchanged or by replacing it with its ``partner'' in this set (that is, $0\leftrightarrow1$ or $3\leftrightarrow4$); in this case a lower symbol remains lower and an upper symbol remains upper. Therefore $x$ has exactly $2^l-1$ neighbors in $T$, and all of them lie in the same one of the codes ${\mathcal C}$, $\widetilde{\mathcal C}$ as $x$; in particular, there are no edges between ${\mathcal C}$ and $\widetilde{\mathcal C}$.

Let $x\notin T$, i.e. $w\ge1$ symbols of $x$ are equal to $2$ or $5$. The symbol $2$ can be replaced by $1$ (lower) or $3$ (upper), the symbol $5$ by $0$ (lower) or $4$ (upper), and each of the lower and upper symbols in two ways preserving its type. Since $w\ge1$, all $2^l$ resulting words are different from $x$, i.e. they are neighbors of $x$ in $T$, and, since the type of a symbol equal to $2$ or $5$ can be chosen freely, exactly half of them have an even number of lower symbols. Thus $C=B=2^{l-1}$, $A=2^l-1<3^l-1$, and the statement follows from Proposition~\ref{P1}.
\end{proof}

\begin{corollary}\label{Cor:linf}
For any $l\geq 1$ and $r\geq 1$ there exists a CRC in $G_{6r}^{\frac{3^l-1}{2}}$ and in $G^{\frac{3^l-1}{2}}$ with the parameter matrix of Theorem~\ref{T2}.
\end{corollary}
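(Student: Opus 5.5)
The plan is to chain three results already available: Theorem~\ref{T2}, Construction~K4, and Construction~K2. First, Theorem~\ref{T2} gives, for every $l\ge1$, a CRC ${\mathcal C}=\{x\in\{0,1,3,4\}^l:\ x$ has an even number of lower symbols$\}$ in $D_6^l$ with the stated parameter matrix.

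Second, I apply Construction~K4 with $q=6$, $m=l$ and $N=\frac{3^l-1}{2}$. Let $T$ be the $N\times l$ matrix whose rows are the words of $\{0,1,-1\}^l$ with first nonzero symbol $1$. The map $\phi(x)=xT$ sends $\{\pm e^i\}$ bijectively onto $\{0,1,-1\}^l\setminus\{0^l\}$. It is surjective because $T$ contains the rows $e^1,\ldots,e^l$, so some $l\times l$ minor equals $1$. Proposition~\ref{P:cover} then applies with $\lambda=1$. Hence $\phi^{-1}({\mathcal C})$ is a CRC in $G_6^{N}$ with the same parameter matrix.

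Third, Construction~K2 with $q=6$ lifts this code along $\phi_6:\Z_{6r}^N\to\Z_6^N$, or $\Z^N\to\Z_6^N$, again with $\lambda=1$. This yields CRCs with the unchanged matrix in $G_{6r}^N$ and in $G^N$.

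The only point needing care is the degenerate case $l=1$, where $N=1$ and $D_6^1=G_6$. Here K4 is just the identity, and the code is the edge $\{0,1\}$ with matrix $\left(\begin{smallmatrix}1&1&0\\1&0&1\\0&1&1\end{smallmatrix}\right)$, in agreement with Theorem~\ref{T2}. I expect no real obstacle: the surjectivity check and the value $\lambda=1$ in each covering step are the only things to verify.
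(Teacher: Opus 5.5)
Your proposal is correct and matches the paper's proof, which likewise chains Theorem~\ref{T2}, Construction~K4 with $q=6$, $m=l$, and Construction~K2. One tiny slip: for $l=1$ the code ${\mathcal C}$ of Theorem~\ref{T2} is $\{3,4\}$ (a word with zero lower symbols has an even count), and $\{0,1\}$ is $\widetilde{\mathcal C}$; since $\{3,4\}$ is a translate of $\{0,1\}$ with the same parameter matrix, this does not affect the argument.
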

\begin{proof}
By Theorem~\ref{T2} and construction K4 (with $q=6$, $m=l$), the preimage of ${\mathcal C}$ is a CRC in $G_6^{\frac{3^l-1}{2}}$ with the same parameter matrix; then construction K2 from Section \ref{SS:cover} gives the CRCs in $G_{6r}^{\frac{3^l-1}{2}}$ and $G^{\frac{3^l-1}{2}}$.
\end{proof}
\begin{remark}
The example of the code from Theorem~\ref{T2} for $l=2$ in Fig.~\ref{fig:D62} was proposed by S.V.~Avgustinovich.
\end{remark}
{\bf Open problem (S.V. Avgustinovich).} Obtain a classification of CRCs in the graphs $D_q^n$ (Cayley graphs of the group $\Z_q^n$ with the $l_{\infty}$ metric), $n\geq 2$.
\subsection{CRCs from binary perfect codes}

Let $n=2^l$, $l\ge1$, and let $P$ be a binary perfect code of length $n-1$.

By K1, the code ${\mathcal B}=\phi_2^{-1}(P)\subset\Z_6^{n-1}$ is a CRC in $G_6^{n-1}$ with matrix $\begin{pmatrix}0&2n-2\\2&2n-4\end{pmatrix}$.

From the parameter matrix of the perfect code $P$ in $H(n-1,2)$ we have the following obvious fact:

$$\textbf{(H)} \mbox{ if } z\notin {\mathcal B}, \mbox{ then the neighbors of }z\mbox{ in }{\mathcal B}\mbox{ are exactly }z+e^j\mbox{ and }z-e^j\mbox{ for a unique }j.$$

We write the vertices of the graph $G_6^n$ in the form $x=(z,\alpha)$, $z\in\Z_6^{n-1}$, $\alpha\in\Z_6$, and consider the homomorphism $\psi$ from $\Z_6^n$ onto $\Z_3$:
\[ \psi(z,\alpha)=\alpha-\sum_{i=1}^{n-1} z_i \bmod 3 .
\]

Note that the homomorphism $\psi$ maps the graph $G_6^n$ onto a multigraph which is the complete graph $K_3$ with each edge of multiplicity $n$.

Put
\[
T=\ker\psi=\bigcup_{z\in \Z_6^{n-1}} z\times s(z),\qquad s(z)=\Bigl\{\sum z_i,\ \sum z_i+3\Bigr\}\subset\Z_6,
\]
\[
{\mathcal C}=\bigcup_{z\in {\mathcal B}} z\times s(z),\qquad \widetilde{\mathcal C}=\bigcup_{z\notin {\mathcal B}} z\times s(z).
\]

\begin{theorem}\label{T3}
For each $l\geq 1$ the code ${\mathcal C}$ is a CRC in $G_6^{2^l}$ with parameter matrix
$\begin{pmatrix}
0& 2^{l+1}&0 \\
1& 2^l &2^l-1\\
0&2^{l+1}&0
\end{pmatrix}$.
Moreover, for all $l\ge1$, $r\ge1$ CRCs with the same parameter matrix exist in $G_{6r}^{2^l}$ and in $G^{2^l}$.
\end{theorem}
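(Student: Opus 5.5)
The plan is to apply Proposition~\ref{P1} to $T=\ker\psi={\mathcal C}\cup\widetilde{\mathcal C}$ in the $\delta$-regular graph $G_6^n$, $\delta=2n$, $n=2^l$. The target values are $A=0$, $C=1$ and $B=n-1$. With these, Proposition~\ref{P1} gives exactly the matrix with rows $(0,2n,0)$, $(1,n,n-1)$, $(0,2n,0)$, which is the stated matrix since $2n=2^{l+1}$.

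\textbf{Step 1: the basic conditions of Proposition~\ref{P1}.} The sets ${\mathcal C}$ and $\widetilde{\mathcal C}$ are disjoint by definition. Both are nonempty, since ${\mathcal B}\neq\varnothing$ and ${\mathcal B}\ne\Z_6^{n-1}$ (here $n-1\ge1$).

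\textbf{Step 2: $T$ is independent, so $A=0$.} The key observation is how $\psi$ changes along edges. Replacing $z$ by $z\pm e^j$ changes $\psi$ by $\mp1$, and replacing $\alpha$ by $\alpha\pm1$ changes $\psi$ by $\pm1$ (mod $3$). Since every edge changes $\psi$ by $\pm1\not\equiv0$, no two vertices of $\ker\psi$ are adjacent. Hence $A=0<\delta$, and in particular there are no edges between ${\mathcal C}$ and $\widetilde{\mathcal C}$.

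\textbf{Step 3: each vertex outside $T$ has $n$ neighbours in $T$.} Take $x=(z,\alpha)\notin T$, so $\psi(x)=\varepsilon\in\{1,2\}$, and reaching $T$ requires changing $\psi$ by $-\varepsilon$.
\begin{itemize}
\item If $\varepsilon=1$, the neighbours of $x$ in $T$ are $(z+e^j,\alpha)$ for $1\le j\le n-1$, together with $(z,\alpha-1)$.
\item If $\varepsilon=2$, they are $(z-e^j,\alpha)$ for $1\le j\le n-1$, together with $(z,\alpha+1)$.
\end{itemize}
In both cases $x$ has exactly $n$ neighbours in $T$, so $B+C=n$.

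\textbf{Step 4: exactly one of these $n$ neighbours lies in ${\mathcal C}$, so $C=1$.} This is the step I expect to need the most care; it uses (H) and that ${\mathcal B}=\phi_2^{-1}(P)$.
\begin{itemize}
\item Case $z\in{\mathcal B}$. The neighbour $(z,\alpha\mp1)$ lies in ${\mathcal C}$. The words $z\pm e^j$ project under $\phi_2$ to words at Hamming distance $1$ from $\phi_2(z)\in P$, so they are not in ${\mathcal B}$ (perfect codes have minimum distance $3$). Hence the other $n-1$ neighbours lie in $\widetilde{\mathcal C}$.
\item Case $z\notin{\mathcal B}$. The neighbour $(z,\alpha\mp1)$ lies in $\widetilde{\mathcal C}$. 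By (H) there is a unique $j$ such that both $z+e^j$ and $z-e^j$ are in ${\mathcal B}$, and no other $z\pm e^i$ is in ${\mathcal B}$. So whichever sign $\varepsilon$ forces, exactly one of the remaining $n-1$ neighbours is in ${\mathcal C}$.
\end{itemize}
The point to stress is that (H) gives both signs for the same $j$, so the sign dictated by $\psi(x)$ does not matter.

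\textbf{Step 5: conclusion.} In all cases $C=1$ and $B=n-1$. Proposition~\ref{P1}(1) now yields that ${\mathcal C}$ is a CRC in $G_6^{2^l}$ with the stated parameter matrix. Finally, construction K2 (with $q=6$) transfers ${\mathcal C}$ to CRCs with the same matrix in $G_{6r}^{2^l}$ and in $G^{2^l}$.
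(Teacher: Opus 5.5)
Your proof is correct and follows essentially the same route as the paper. You apply Proposition~\ref{P1} to $T=\ker\psi$ with $A=0$, identify the $n$ neighbours in $T$ of each $x\notin T$ from the value of $\psi(x)$, get $C=1$, $B=n-1$ by splitting on $z\in{\mathcal B}$ (independence of ${\mathcal B}$) versus $z\notin{\mathcal B}$ (property (H)), and lift with K2; computing the change of $\psi$ along edges directly and checking non-emptiness of ${\mathcal C}$ and $\widetilde{\mathcal C}$ are only cosmetic refinements.
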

\begin{proof}
Since $\psi(T)$ is a single vertex of the multigraph $K_3$ (with each edge of multiplicity $n$), which has no loops, and since $\psi$ is a homomorphism from $G_{6}^n$ onto this multigraph, no pair of vertices of $T={\mathcal C}\cup \widetilde{\mathcal C}$ is adjacent, i.e. $A=0$; in particular, there are no edges between ${\mathcal C}$ and ${\widetilde{\mathcal C}}$.

 Let $x=(z,\alpha)\notin T$ and $\beta=\psi(x)\in\Z_3\setminus\{0\}=\{1,-1\}$. Denote
\[
y^n=(z,\ \alpha-\beta),\qquad y^k=x+\beta e^k=(z+\beta e^k,\ \alpha),\, k<n.
\]
Here $e^k=(0,\ldots,0,1,0,\ldots,0)$ is the word of length $n-1=2^l-1$ with the one in the $k$th position.

For each $1\le k\le n$, exactly one of the neighbors $x\pm e^k$ lies in $T$, namely $y^k$, since $\psi(y^n)=\beta-\beta=0$ and $\psi(y^k)=\beta-\beta=0$ for $k<n$, while the other neighbors have $\psi(z,\alpha+\beta)=2\beta\ne0$ and $\psi(z-\beta e^k,\alpha)=2\beta\ne0$. Thus $x$ has exactly $n$ neighbors $y^1,\ldots,y^n$ in $T$ and exactly $2n-n=n$ neighbors in $\overline T$. By the definition of $\mathcal C$ we have
\[
y^n\in {\mathcal C}\iff z\in {\mathcal B},\qquad y^k\in {\mathcal C}\iff z+\beta e^k\in {\mathcal B}\mbox{ for all }k<n.
\]
Recall that the code ${\mathcal B}$ is an independent set in $G_6^{n-1}$, since the parameter matrix of this CRC has $0$ in the upper left corner. We have the following cases.

 Case A. Let $z\in {\mathcal B}$. Then $y^n\in {\mathcal C}$, and by the independence of ${\mathcal B}$, $z+\beta e^k\notin {\mathcal B}$ for all $k<n$. The vertex $x$ has exactly $n-1$ neighbors $y^1,\ldots,y^{n-1}$ in $\widetilde{\mathcal C}$ and exactly one, namely $y^n$, in $\mathcal C$.

Case B. Let $z\notin {\mathcal B}$. Then $y^n\in\widetilde{\mathcal C}$, and by (H) the neighbors of $z$ in ${\mathcal B}$ are exactly $z+e^j$ and $z-e^j$ for a unique $j<n$; hence $z+\beta e^k\in {\mathcal B}\iff k=j$. Therefore $y^j\in {\mathcal C}$ and $y^k\in\widetilde{\mathcal C}$ for $k\ne j$.
\noindent In both cases $x$
has exactly $C=1$ neighbors in ${\mathcal C}$ and $B=n-1$ in $\widetilde{\mathcal C}$.
 The conditions of Proposition~\ref{P1} are satisfied with $\delta=2n$, $A=0$, $C=1\ge1$, $B=n-1$, and the parameter matrix of the code ${\mathcal C}$ is
\[
M=\begin{pmatrix}
A& \delta-A&0 \\
C& \delta-C-B &B\\
0&\delta-A&A
\end{pmatrix}
=
\begin{pmatrix}
0& 2n&0 \\
1& 2n-(n-1)-1 &n-1\\
0&2n&0
\end{pmatrix}
=
\begin{pmatrix}
0& 2n&0 \\
1& n &n-1\\
0&2n&0
\end{pmatrix}.
\]
We use K2 to obtain a CRC in $G_{6r}^n$ and $G^n$.
\end{proof}
\noindent\emph{Parity-check matrix.} Let $P$ be the binary Hamming code with an $l\times(n-1)$ parity-check matrix $H$. The code ${\mathcal B}=\{z\in\Z_6^{n-1}:\ Hz^\top\equiv 0\pmod 2\}$ has the parity-check matrix $3H$ over $\Z_6$, and $T=\ker\psi=\{(z,\alpha):\ 2(\alpha-\sum z_i)=0\}$. Since ${\mathcal C}=({\mathcal B}\times\Z_6)\cap T$, the code ${\mathcal C}$ is linear with parity-check matrix
\[
\begin{pmatrix}3H&0\\ 4\cdot\mathbf 1&2\end{pmatrix}\quad\text{over }\Z_6 ,
\] where $\mathbf 1$ is the all-ones row.
\subsection{``Lengthening'' of a doubled perfect code}
Let $q\ge4$ be even and let $P$ be a perfect code in $G_q^m$. We write the vertices of $G_q^{2m+1}$ in the form $(u,v,\alpha)$, $u,v\in\Z_q^m$, $\alpha\in\Z_q$, and put $x=u+v$; for $x\in\Z_q^m$ let $t(x)=\{\sum\limits_{i=1}^{m} x_i+2j:\ 0\le j<q/2\}\subset\Z_q$. Consider the codes
\begin{equation}\label{2x}
{\mathcal C}=\bigcup_{y\in \Z_q^m,\ x\in P}y \times (x-y)\times t(x),\qquad
\widetilde{\mathcal C}=\bigcup_{y\in \Z_q^m,\ x\notin P}y \times (x-y)\times t(x),\qquad T={\mathcal C}\cup \widetilde{\mathcal C}.
\end{equation}
Thus $T=\{(u,v,\alpha):\ \alpha+{\sum\limits_{i=1}^{m}} (u_i+ v_i)\ \text{is even}\}$ is one of the parts of the bipartite graph $G_q^{2m+1}$, $\overline T$ is the other part, and ${\mathcal C}$ and $\widetilde{\mathcal C}$ differ by the condition $x=u+v\in P$ or $x\notin P$.
\begin{theorem}\label{T4}
If for even $q\ge4$ there exists a perfect code $P$ in $G_q^m$, then the code ${\mathcal C}$ in \eqref{2x} is a CRC in $G_q^{2m+1}$ with parameter matrix $\begin{pmatrix}
0& 4m+2&0 \\
2& 0 &4m\\
0&4m+2&0
\end{pmatrix}$.
\end{theorem}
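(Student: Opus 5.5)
We will verify the hypotheses of Proposition~\ref{P1} for the partition $T={\mathcal C}\cup\widetilde{\mathcal C}$ with $\delta=4m+2$, aiming for $A=0$, $C=2$, $B=4m$.

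\emph{Step 1: $A=0$ and no edges inside $T$.} $T$ is one part of the bipartite graph $G_q^{2m+1}$, so it is independent. Hence $A=0<\delta$, and in particular there are no edges between ${\mathcal C}$ and $\widetilde{\mathcal C}$. Since $q$ is even, $\alpha+\sum(u_i+v_i)$ is well defined mod $2$. So $T$ is indeed exactly the union in \eqref{2x}: for fixed $u,v$ with $x=u+v$, the admissible $\alpha$ are those of the parity of $\sum x_i$, which is $t(x)$.

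\emph{Step 2: every vertex of $\overline T$ has exactly two neighbours in ${\mathcal C}$.} Take $(u,v,\alpha)\notin T$ and set $x=u+v$. All $4m+2$ neighbours lie in $T$ by bipartiteness, so $B+C=4m+2$ and it suffices to count those in ${\mathcal C}$. The two neighbours $(u,v,\alpha\pm1)$ keep $x$ unchanged. A neighbour $(u\pm e^i,v,\alpha)$ or $(u,v\pm e^i,\alpha)$ changes $x$ to $x\pm e^i$. So the multiset of values of $x'$ over all neighbours is: $x$ twice, and each $x\pm e^i$ twice (once via $u$, once via $v$).

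We then split into two cases. If $x\in P$, the two $\alpha$-neighbours lie in ${\mathcal C}$. Every $x\pm e^i$ is adjacent to $x\in P$, so it lies outside $P$, because $P$ is independent (its matrix has $0$ in the corner). Thus $C=2$. If $x\notin P$, the $\alpha$-neighbours are in $\widetilde{\mathcal C}$. Since $P$ is perfect, $x$ has exactly one neighbour $x\pm e^j$ in $P$ in $G_q^m$; for $q\ge 4$ we have $e^j\neq -e^j$, so it is one specific signed direction. That value of $x'$ is attained exactly twice, giving $C=2$ again. In both cases $B=4m$.

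\emph{Step 3: conclusion.} Proposition~\ref{P1}(1) applies with $\delta=4m+2$, $A=0$, $C=2\ge1$, $B=4m$. We must also check that ${\mathcal C}$ and $\widetilde{\mathcal C}$ are nonempty: $P\neq\varnothing$, and $P\neq\Z_q^m$ for $m\ge1$. This yields the stated matrix with middle row $(2,\,4m+2-4m-2,\,4m)=(2,0,4m)$.

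The only delicate point is the bookkeeping in Step 2, namely that each $x'=x\pm e^i$ arises from exactly two neighbours. Since $q\ge4$, the vectors $u+e^i$ and $u-e^i$ are distinct, and the $u$- and $v$-moves are different vertices, so the count is exact.
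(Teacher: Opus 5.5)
Your proposal is correct and follows the same route as the paper. You apply Proposition~\ref{P1} to the even part $T$, which is independent, so $A=0$. You then split on $x=u+v\in P$ versus $x\notin P$ to get $C=2$ and $B=4m$. Your extra checks (parity is well defined for even $q$, $e^j\neq -e^j$, and $\widetilde{\mathcal C}\neq\varnothing$) fill in details the paper leaves implicit.
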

\begin{proof}
The part $T$ is independent, so $A=0$ and there are no edges between ${\mathcal C}$ and $\widetilde{\mathcal C}$. Let $z=(u,v,\alpha)$ be a vertex in the part $\overline{T}$, $x=u+v$. All neighbors of $z$ lie in the part $T$; the neighbors $(u\pm e^i,v,\alpha)$ and $(u,v\pm e^i,\alpha)$, $i=1,\ldots,m$, correspond to the word $x\pm e^i$, and the neighbors $(u,v,\alpha\pm1)$ correspond to the word $x$.
If $x\in P$, then $x\pm e^i\notin P$ by the perfectness of the code $P$, and the neighbors of $z$ in ${\mathcal C}$ are exactly $(u,v,\alpha\pm1)$. If $x\notin P$, then $x$ has a unique neighbor $x+\beta e^i\in P$ (where $\beta\in\{1,-1\}$), and the neighbors of $z$ in ${\mathcal C}$ are exactly $(u+\beta e^i,v,\alpha)$ and $(u,v+\beta e^i,\alpha)$. In both cases $C=2$, $B=4m$, and the statement follows from Proposition~\ref{P1}.
\end{proof}
\noindent\emph{Parity-check matrix.} If $P$ is a linear code with parity-check matrix $H$ over $\Z_q$, then ${\mathcal C}=\{(u,v,\alpha):\ H(u+v)^\top=0,\ \alpha+\sum\limits_{i=1}^{m} (u_i+ v_i)\ \text{is even}\}$ is linear with parity-check matrix
\[
\begin{pmatrix}H&H&0\\ \frac q2\mathbf 1&\frac q2\mathbf 1&\frac q2\end{pmatrix}.
\]
\begin{remark}
If $(2m+1)\mid q$, then a perfect code in $G_q^m$ exists (Section~\ref{SS:simple}). According to Horak, AlBdaiwi and Milazzo \cite{AHM}, perfect codes in $G_q^m$ exist for all $m$ such that the product of all prime divisors of $2m+1$ divides $q$. In particular, such a code exists in $G_6^4$ (for example, the preimage under $\phi_3$ of the ternary Hamming code of length $4$), so Theorem~\ref{T4} gives a CRC in $G_6^9$. 
\end{remark}
\section{CRCs in the lattice graphs $G_q^n$, $n\leq 4$, $q=5$, $6$, $7$}\label{S4}

The enumeration of the parameter matrices of all CRCs in the graphs $G_q^n$ for $q\leq 7$, $n\leq 4$ was carried out by solving an integer linear programming problem. The program was written in Python, and the problems were solved with the HiGHS solver.

Let ${\mathcal C}$ be a CRC with covering radius $\rho$ in a graph with vertex set $V$ and adjacency matrix $L$. Its \emph{characteristic matrix} is the $|V|\times(\rho+1)$ binary matrix $X$ with $X_{u,i}=1$ if $u\in{\mathcal C}_i$ and $X_{u,i}=0$ otherwise. It is easy to see \cite[Proposition 1.4]{CRC} that a binary matrix $X$ is the characteristic matrix of a CRC with a tridiagonal parameter matrix $M$ (with nonzero sub- and superdiagonal entries) if and only if
\[
LX=XM\qquad\text{and}\qquad \sum_{i=0}^{\rho} X_{u,i}=1\quad\text{for every vertex } u .
\]
Thus a CRC with parameter matrix $M$ exists if and only if this system has a binary solution.

Note that the case of the graph $G_7^4$ with $7^4=2401$ vertices is already borderline, since the number of variables is already very large. However, the graph has small degree and fairly large diameter, which has a positive effect on the speed of the solver. For instance, already in the Hamming graph $H(4,6)$ with $6^4=1296$ vertices and degree $20$, the solver cannot cope with the problem of the existence of CRCs.

Before solving the problem for a candidate matrix $M$ with covering radius $\rho$, we checked that $M$ satisfies the following necessary conditions.

1. \emph{Monotonicity of the off-diagonal entries.} As proved in \cite{AvgVas22}, the parameter matrix of every CRC in the lattice graphs $G_q^n$ and $G^n$ satisfies $M_{i,i+1}\geq M_{i+1,i+2}$ for $0\leq i\leq \rho-2$ and $M_{i,i-1}\leq M_{i+1,i}$ for $1\leq i\leq \rho-1$, i.e. $b_0\ge b_1\ge\ldots\ge b_{\rho-1}$ and $c_1\le c_2\le\ldots\le c_\rho$.

2. \emph{Integrality of the cell sizes.} The sizes of the cells ${\mathcal C}_i$ of the distance partition of a graph of order $q^n$ are determined by the relations \cite[Lemma 7.2]{CRC}
\[
M_{i,i+1}|{\mathcal C}_i|=M_{i+1,i}|{\mathcal C}_{i+1}|\quad(0\leq i\leq\rho-1),\qquad \sum_{i=0}^{\rho} |{\mathcal C}_i|=q^n,
\]
and they must be integers.

3. \emph{Integrality of eigenvalues.} By Lloyd theorem \cite[Lemma 7.4]{CRC}, every eigenvalue of the parameter matrix of a CRC is an eigenvalue of the graph. The eigenvalues of the cycle $G_q$ are $2\cos(2\pi k/q)$, $k=0,\ldots,q-1$, and the eigenvalues of $G_q^n$ are all sums of $n$ eigenvalues of $G_q$. For example, the eigenvalues of $G_6$ are $2,1,-1,-2$, and for $n\geq 2$ the eigenvalues of $G_6^n$ are all integers from $-2n$ to $2n$; hence for $q=6$ all eigenvalues of $M$ must be integers.

These conditions significantly reduce the set of possible parameter matrices. For $G_6^4$, up to passing to the opposite code, there are $496$ tridiagonal $3\times 3$ matrices with row sum $8$ and positive off-diagonal entries fulfilling the monotonicity condition, $76$ of them have integer cells, and only $41$ fulfill all three conditions.

In all tables of this section, CRCs are given by their intersection arrays $[b_0,\ldots,b_{\rho-1};\,c_1,\ldots,c_\rho]$ (see Section~\ref{S2}), which uniquely determine the parameter matrices. If ${\mathcal C}$ is a CRC with array $[b_0,\ldots,b_{\rho-1};\,c_1,\ldots,c_\rho]$, then the \emph{opposite code} ${\mathcal C}_\rho$ is a CRC with array $[c_\rho,\ldots,c_1;\,b_{\rho-1},\ldots,b_0]$; of each such pair of arrays, only one is listed.
Notation in the column ``construction'': \textbf{pc} is a perfect code; $k$\,\textbf{pc} is a $k$-fold perfect code obtained as the union of $k$ cosets of a linear perfect code (in a $\delta$-regular graph it is a CRC with array $[\delta+1-k;\,k]$; $1$\,pc is pc itself); \textbf{sing} is a one-vertex code; \textbf{even} is the even-weight code; \textbf{ext pc} is an extended perfect code; \textbf{even pc} is the even-weight subcode of a perfect code; \textbf{rep} is the ternary repetition code of length $3$, a well-known CRC in $H(3,3)$ with intersection array $[6,2;\,1,6]$, see Fig.~\ref{fig:H33}.
\begin{figure}[ht]
\centering
\begin{tikzpicture}[scale=0.9,
  vtx/.style={circle,draw=black,fill=white,inner sep=0pt,minimum size=6pt},
  cvtx/.style={circle,draw=black,inner sep=0pt,minimum size=9pt}]
\def\hthreecoords{%
  \foreach \i in {0,1,2}{\foreach \j in {0,1,2}{\foreach \k in {0,1,2}{
    \coordinate (v\i\j\k) at ({1.55*\i+0.7*\j},{1.55*\k+0.4*\j});
  }}}%
}
\def\hthreeedges{%
  \foreach \u in {0,1,2}{\foreach \v in {0,1,2}{
    \draw[gray!60] (v0\u\v) -- (v1\u\v) -- (v2\u\v);
    \draw[gray!60] (v0\u\v) to[bend left=25] (v2\u\v);
    \draw[gray!60] (v\u0\v) -- (v\u1\v) -- (v\u2\v);
    \draw[gray!60] (v\u0\v) to[bend right=25] (v\u2\v);
    \draw[gray!60] (v\u\v0) -- (v\u\v1) -- (v\u\v2);
    \draw[gray!60] (v\u\v0) to[bend right=25] (v\u\v2);
  }}%
}
\begin{scope}
  \hthreecoords
  \hthreeedges
  \foreach \i/\j/\k in {0/1/2,0/2/1,1/0/2,1/2/0,2/0/1,2/1/0}{
    \node[cvtx,fill=orange!80,rectangle,minimum size=8pt] at (v\i\j\k) {};
  }
  \foreach \i in {0,1,2}{\foreach \j in {0,1,2}{\foreach \k in {0,1,2}{
    \pgfmathtruncatemacro{\eqs}{(\i==\j)+(\j==\k)+(\i==\k)}
    \ifnum\eqs=1
      \node[cvtx,fill=white] at (v\i\j\k) {};
    \fi
  }}}
  \foreach \i in {0,1,2}{
    \node[cvtx,fill=blue!75] at (v\i\i\i) {};
    \node[font=\scriptsize,below right=1pt] at (v\i\i\i) {$\i\i\i$};
  }
  \node[cvtx,fill=blue!75] at (5.9,3.6) {};
  \node[anchor=west,font=\small] at (6.2,3.6) {${\mathcal C}$ };
  \node[cvtx,fill=white] at (5.9,2.8) {};
  \node[anchor=west,font=\small] at (6.2,2.8) {${\mathcal C}_1$ };
  \node[cvtx,fill=orange!80,rectangle,minimum size=8pt] at (5.9,2.0) {};
  \node[anchor=west,font=\small] at (6.2,2.0) {${\mathcal C}_2$ };
\end{scope}
\end{tikzpicture}
\caption{Hamming graph $H(3,3)$  and ternary code ${\mathcal C}=\{000,111,222\}$, which is CRC with parameter matrix $\begin{pmatrix}0&6&0\\1&3&2\\0&6&0\end{pmatrix}$.} 
\label{fig:H33}
\end{figure}

A graph written after a code name, as in ``sing $H(2,2)$'', means that the code is constructed in that graph and lifted to $G_q^n$ by one of the coverings K1--K3 of Section~\ref{SS:cover}; if no graph is given, the code is constructed in $G_q^n$ itself. The entries $G_q^m+2k$ and $G_q^m*k$ denote addition and multiplication (Section~\ref{SS:simple}) applied to a CRC from the table for $G_q^m$.
For codes obtained by the constructions of Section~\ref{S3}, the corresponding theorem or corollary is given together with its parameters. New constructions are marked with an asterisk.

Note that all CRCs found in $G_q^n$ for $q=5,6,7$ and $n\geq 3$ have small covering radii, namely at most $n$. For every $n\geq 1$, a CRC with covering radius $\lfloor q/2 \rfloor$ exists in $G_q^n$: take a one-vertex code in $G_q$ and add $n-1$ inessential coordinates (addition $+2(n-1)$). By \cite{AvgVas22}, the covering radius of a CRC in $G_q^{n}$ is at most $2n$, and this bound is attained, for example, when $q$ is divisible by $4$.

{\bf Open problem.} Is it true that for $q$ not divisible by $4$ and $n\geq \lfloor q/2\rfloor+1$, the covering radius of every CRC in $G_q^n$ is at most $n$?
\subsection{$G_q^n$, $q\le4$}
Since $G_2^n=H(n,2)$, $G_3^n=H(n,3)$ and $G_4^n\cong H(2n,2)$, these cases reduce to Hamming graphs; see the survey \cite[Section 7]{CRC}.

\subsection{$G_6^n$, $n\leq 4$}
\paragraph{$G_6$}
\begin{center}\begin{fitbox}
\begin{tabular}[t]{@{}l@{\quad}l@{}}
\toprule
\multicolumn{2}{@{}l}{$\rho=1$}\\
array & construction\\
\midrule
$[2;1]$ & pc $G_6$\\
$[2;2]$ & even\\
\bottomrule
\end{tabular}
\qquad
\begin{tabular}[t]{@{}l@{\quad}l@{}}
\toprule
\multicolumn{2}{@{}l}{$\rho=2$}\\
array & construction\\
\midrule
$[1,1;1,1]$ & Corollary~\ref{Cor:cascade}, $l=1$\\
\bottomrule
\end{tabular}
\qquad
\begin{tabular}[t]{@{}l@{\quad}l@{}}
\toprule
\multicolumn{2}{@{}l}{$\rho=3$}\\
array & construction\\
\midrule
$[2,1,1;1,1,2]$ & sing $G_6$\\
\bottomrule
\end{tabular}
\end{fitbox}\end{center}
\paragraph{$G_6^2$}
\begin{center}\begin{fitbox}
\begin{tabular}[t]{@{}l@{\quad}l@{}}
\toprule
\multicolumn{2}{@{}l}{$\rho=1$}\\
array & construction\\
\midrule
$[2;1]$ & $G_6+2$\\
$[2;2]$ & $G_6+2$\\
$[4;2]$ & $G_6*2$\\
$[4;4]$ & even\\
\bottomrule
\end{tabular}
\qquad
\begin{tabular}[t]{@{}l@{\quad}l@{}}
\toprule
\multicolumn{2}{@{}l}{$\rho=2$}\\
array & construction\\
\midrule
$[1,1;1,1]$ & $G_6+2$\\
$[4,2;1,2]$ & sing $H(2,3)$\\
$[2,2;2,2]$ & $G_6*2$\\
$[4,1;1,4]$ & Theorem~\ref{T3}, $n=2$\\
$[4,2;2,4]$ & sing $H(2,2)$\\
\bottomrule
\end{tabular}
\qquad
\begin{tabular}[t]{@{}l@{\quad}l@{}}
\toprule
\multicolumn{2}{@{}l}{$\rho=3$}\\
array & construction\\
\midrule
$[2,1,1;1,1,2]$ & $G_6+2$\\
$[4,2,2;2,2,4]$ & $G_6*2$\\
\bottomrule
\end{tabular}
\end{fitbox}\end{center}
\paragraph{$G_6^3$.} New constructions are marked with an asterisk.
\begin{center}\begin{fitbox}
\begin{tabular}[t]{@{}l@{\quad}l@{}}
\toprule
\multicolumn{2}{@{}l}{$\rho=1$}\\
array & construction\\
\midrule
$[2;1]$ & $G_6+4$\\
$[2;2]$ & $G_6+4$\\
$[4;2]$ & $G_6*2+2$\\
$[4;4]$ & $G_6^2+2$\\
$[6;2]$ & pc $H(3,2)$\\
$[6;3]$ & $G_6*3$\\
$[6;6]$ & even\\
\bottomrule
\end{tabular}
\qquad
\begin{tabular}[t]{@{}l@{\quad}l@{}}
\toprule
\multicolumn{2}{@{}l}{$\rho=2$}\\
array & construction\\
\midrule
$[1,1;1,1]$ & $G_6+4$\\
$[4,2;1,2]$ & $G_6^2+2$\\
$[2,2;2,2]$ & $G_6*2+2$\\
$[3,3;3,3]$ & $G_6*3$\\
$[4,1;1,4]$ & $G_6^2+2$\\
$[4,2;2,4]$ & $G_6^2+2$\\
$[6,1;2,6]$ & rep $H(3,3)$\\
$[6,2;4,6]$ & Theorem~\ref{T4}, $m=1$$^{\ast}$\\
\bottomrule
\end{tabular}
\qquad
\begin{tabular}[t]{@{}l@{\quad}l@{}}
\toprule
\multicolumn{2}{@{}l}{$\rho=3$}\\
array & construction\\
\midrule
$[2,1,1;1,1,2]$ & $G_6+4$\\
$[3,2,1;2,4,6]$ & sing $H(3,3)$\\
$[4,2,2;2,2,4]$ & $G_6*2+2$\\
$[6,3,3;3,3,6]$ & $G_6*3$\\
$[6,4,2;2,4,6]$ & sing $H(3,2)$\\
\bottomrule
\end{tabular}
\end{fitbox}\end{center}
\paragraph{$G_6^4$.} New constructions are marked with an asterisk.
\begin{center}
  \begin{fitbox}\begin{tabular}{ll@{\qquad}ll}
    \toprule
    array & construction & array & construction\\
    \midrule
    \multicolumn{2}{c}{\textit{Covering radius $\rho=1$}} & \multicolumn{2}{c}{\textit{Covering radius $\rho=2$}}\\
    \cmidrule(lr){1-2} \cmidrule(lr){3-4}
    $[2;\,1]$ & $G_6+6$ & $[1,1;\,1,1]$ & $G_6+6$\\
    $[2;\,2]$ & $G_6+6$ & $[2,1;\,2,4]$ & $G_6^2+4$\\
    $[4;\,2]$ & $G_6*2+4$ & $[2,2;\,2,2]$ & $G_6*2+4$\\
    $[4;\,4]$ & $G_6^2+4$ & $[3,3;\,3,3]$ & $G_6*3+2$\\
    $[5;\,4]$ & 4 pc $H(4,3)$ & $[4,1;\,1,4]$ & $G_6^2+4$\\
    $[6;\,2]$ & $G_6^3+2$ & $[4,2;\,2,4]$ & $G_6^2+4$\\
    $[6;\,3]$ & 3 pc $H(4,3)$ & $[4,2;\,4,8]$ & $G_6^2*2$\\
    $[6;\,6]$ & $G_6^3+2$ & $[4,4;\,4,4]$ & $G_6*4$\\
    $[7;\,2]$ & 2 pc $H(4,3)$ & $[5,2;\,2,5]$ & Corollary~\ref{Cor:linf}, $l=2$$^{\ast}$\\
    $[8;\,1]$ & pc $H(4,3)$ & $[6,1;\,2,6]$ & $G_6^3+2$\\
    $[8;\,4]$ & $G_6*4$ & $[6,2;\,4,6]$ & $G_6^3+2$ (Theorem~\ref{T4}, $m=1$)$^{\ast}$\\
    $[8;\,8]$ & even & $[7,1;\,1,7]$ & Corollary~\ref{Cor:cascade}, $l=2$$^{\ast}$\\
     &  & $[8,1;\,3,8]$ & Theorem~\ref{T3}, $n=4$$^{\ast}$\\
     &  & $[8,2;\,2,8]$ & $G_6^2*2$\\
     &  & $[8,2;\,6,8]$ & ext pc $H(4,2)$\\
     &  & $[8,4;\,4,8]$ & $G_6^2*2$\\
    \bottomrule
  \end{tabular}\end{fitbox}

\medskip
  \begin{fitbox}\begin{tabular}{ll@{\qquad}ll}
    \toprule
    array & construction & array & construction\\
    \midrule
    \multicolumn{2}{c}{\textit{Covering radius $\rho=3$}} & \multicolumn{2}{c}{\textit{Covering radius $\rho=4$}}\\
    \cmidrule(lr){1-2} \cmidrule(lr){3-4}
    $[2,1,1;\,1,1,2]$ & $G_6+6$ & $[4,3,2,1;\,2,4,6,8]$ & sing $H(4,3)$\\
    $[3,2,1;\,2,4,6]$ & $G_6^3+2$ & $[8,6,4,2;\,2,4,6,8]$ & sing $H(4,2)$\\
    $[4,2,2;\,2,2,4]$ & $G_6*2+4$ &  & \\
    $[6,3,3;\,3,3,6]$ & $G_6*3+2$ &  & \\
    $[6,4,2;\,2,4,6]$ & $G_6^3+2$ &  & \\
    $[8,4,4;\,4,4,8]$ & $G_6*4$ &  & \\
    $[8,7,1;\,1,7,8]$ & even pc $G_6^4$ &  & \\
    \bottomrule
  \end{tabular}\end{fitbox}
\end{center}
\subsection{$G_5^n$, $n\leq 4$}
\paragraph{$G_5$}
\begin{center}
\begin{tabular}[t]{@{}l@{\quad}l@{}}
\toprule
\multicolumn{2}{@{}l}{$\rho=2$}\\
array & construction\\
\midrule
$[1,1;1,2]$ & sing\\
\bottomrule
\end{tabular}
\end{center}
\paragraph{$G_5^2$}
\begin{center}\begin{fitbox}
\begin{tabular}[t]{@{}l@{\quad}l@{}}
\toprule
\multicolumn{2}{@{}l}{$\rho=1$}\\
array & construction\\
\midrule
$[4;1]$ & pc $G_5^2$\\
$[3;2]$ & 2 pc $G_5^2$\\
\bottomrule
\end{tabular}
\qquad
\begin{tabular}[t]{@{}l@{\quad}l@{}}
\toprule
\multicolumn{2}{@{}l}{$\rho=2$}\\
array & construction\\
\midrule
$[1,1;1,2]$ & $G_5+2$\\
$[2,2;2,4]$ & $G_5*2$\\
\bottomrule
\end{tabular}
\end{fitbox}\end{center}
\paragraph{$G_5^3$}
\begin{center}\begin{fitbox}
\begin{tabular}[t]{@{}l@{\quad}l@{}}
\toprule
\multicolumn{2}{@{}l}{$\rho=1$}\\
array & construction\\
\midrule
$[4;1]$ & $G_5^2+2$\\
$[3;2]$ & $G_5^2+2$\\
\bottomrule
\end{tabular}
\qquad
\begin{tabular}[t]{@{}l@{\quad}l@{}}
\toprule
\multicolumn{2}{@{}l}{$\rho=2$}\\
array & construction\\
\midrule
$[1,1;1,2]$ & $G_5+4$\\
$[2,2;2,4]$ & $G_5*2+2$\\
$[3,3;3,6]$ & $G_5*3$\\
\bottomrule
\end{tabular}
\end{fitbox}\end{center}
\paragraph{$G_5^4$}
\begin{center}\begin{fitbox}
\begin{tabular}[t]{@{}l@{\quad}l@{}}
\toprule
\multicolumn{2}{@{}l}{$\rho=1$}\\
array & construction\\
\midrule
$[4;1]$ & $G_5^2+4$\\
$[3;2]$ & $G_5^2+4$\\
$[6;4]$ & $G_5^2*2$\\
$[8;2]$ & $G_5^2*2$\\
\bottomrule
\end{tabular}
\qquad
\begin{tabular}[t]{@{}l@{\quad}l@{}}
\toprule
\multicolumn{2}{@{}l}{$\rho=2$}\\
array & construction\\
\midrule
$[1,1;1,2]$ & $G_5+6$\\
$[2,1;4,8]$ & K3 for a singleton in $H(2,5)$\\
$[2,2;2,4]$ & $G_5*2+4$\\
$[3,3;3,6]$ & $G_5*3+2$\\
$[4,2;3,6]$ & K3 for a cycle of length $4$ in $H(2,5)$ (Fig.~\ref{fig:H25})\\
$[4,4;4,8]$ & $G_5*4$\\
\bottomrule
\end{tabular}
\end{fitbox}\end{center}
\begin{figure}[ht]
\centering
\begin{tikzpicture}[scale=0.9,
  vtx/.style={circle,draw=black,fill=white,inner sep=0pt,minimum size=9pt},
  cvtx/.style={circle,draw=black,inner sep=0pt,minimum size=9pt}]
\begin{scope}
  \foreach \a/\b in {0/1,1/2,2/3,3/4}{
    \draw[gray!70] (\a,2) -- (\b,2);
    \draw[gray!70] (2,\a) -- (2,\b);
  }
  \foreach \a/\b in {0/2,1/3,2/4,0/3,1/4,0/4}{
    \draw[gray!70] (\a,2) to[bend left=40] (\b,2);
    \draw[gray!70] (2,\a) to[bend right=40] (2,\b);
  }
  \draw[red,very thick] (1,2) -- (3,2);
  \draw[red,very thick] (2,1) -- (2,3);
  \draw[red,very thick] (0,2) to[bend left=40] (2,2) to[bend left=40] (4,2);
  \draw[red,very thick] (2,0) to[bend right=40] (2,2) to[bend right=40] (2,4);
  \foreach \i in {0,...,4}{\foreach \j in {0,...,4}{
    \node[vtx] at (\i,\j) {};
  }}
  \foreach \i in {0,1,3,4}{
    \node[vtx,fill=red!35,draw=red] at (\i,2) {};
    \node[vtx,fill=red!35,draw=red] at (2,\i) {};
  }
  \node[vtx,fill=red,draw=red,minimum size=9pt] at (2,2) {};
  \foreach \i in {0,...,4}{
    \node[font=\scriptsize] at (\i,-0.7) {$\i$};
    \node[font=\scriptsize] at (-0.7,\i) {$\i$};
  }
  \node at (2,-1.6) {(a)};
\end{scope}
\begin{scope}[xshift=7.5cm]
  \draw[blue!75,line width=1.6pt] (0,0) -- (1,0) -- (1,1) -- (0,1) -- cycle;
  \foreach \i in {2,3,4}{\foreach \j in {2,3,4}{
    \node[cvtx,fill=orange!80,rectangle,minimum size=8pt] at (\i,\j) {};
  }}
  \foreach \i in {0,1}{\foreach \j in {2,3,4}{
    \node[cvtx,fill=white] at (\i,\j) {};
    \node[cvtx,fill=white] at (\j,\i) {};
  }}
  \foreach \i in {0,1}{\foreach \j in {0,1}{
    \node[cvtx,fill=blue!75] at (\i,\j) {};
  }}
  \foreach \i in {0,...,4}{
    \node[font=\scriptsize] at (\i,-0.7) {$\i$};
    \node[font=\scriptsize] at (-0.7,\i) {$\i$};
  }
  \node[cvtx,fill=blue!75] at (5.4,3.6) {};
  \node[anchor=west,font=\small] at (5.7,3.6) {$\mathcal C$ };
  \node[cvtx,fill=white] at (5.4,2.8) {};
  \node[anchor=west,font=\small] at (5.7,2.8) {${\mathcal C}_1$ };
  \node[cvtx,fill=orange!80,rectangle,minimum size=8pt] at (5.4,2.0) {};
  \node[anchor=west,font=\small] at (5.7,2.0) {${\mathcal C}_2$ };
  \node at (2,-1.6) {(b)};
\end{scope}
\end{tikzpicture}
\caption{(a) The adjacency in the Hamming graph  $H(2,5)$ and (b) CRC  ${\mathcal C} =\{0,1\}^2$ with parameter matrix $\left(\begin{smallmatrix}2&6&0\\2&3&3\\0&4&4\end{smallmatrix}\right)$}
\label{fig:H25}
\end{figure}

\subsection{$G_7^n$, $n\leq 4$}
\paragraph{$G_7$}
\begin{center}
\begin{tabular}[t]{@{}l@{\quad}l@{}}
\toprule
\multicolumn{2}{@{}l}{$\rho=3$}\\
array & construction\\
\midrule
$[1,1,1;1,1,2]$ & sing\\
\bottomrule
\end{tabular}
\end{center}
\paragraph{$G_7^2$}
\begin{center}
\begin{tabular}[t]{@{}l@{\quad}l@{}}
\toprule
\multicolumn{2}{@{}l}{$\rho=3$}\\
array & construction\\
\midrule
$[1,1,1;1,1,2]$ & $G_7+2$\\
$[2,2,2;2,2,4]$ & $G_7*2$\\
\bottomrule
\end{tabular}
\end{center}
\paragraph{$G_7^3$}
\begin{center}\begin{fitbox}
\begin{tabular}[t]{@{}l@{\quad}l@{}}
\toprule
\multicolumn{2}{@{}l}{$\rho=1$}\\
array & construction\\
\midrule
$[6;1]$ & pc $G_7^3$\\
$[5;2]$ & 2 pc $G_7^3$\\
$[4;3]$ & 3 pc $G_7^3$\\
\bottomrule
\end{tabular}
\qquad
\begin{tabular}[t]{@{}l@{\quad}l@{}}
\toprule
\multicolumn{2}{@{}l}{$\rho=3$}\\
array & construction\\
\midrule
$[1,1,1;1,1,2]$ & $G_7+4$\\
$[2,2,2;2,2,4]$ & $G_7*2+2$\\
$[3,3,3;3,3,6]$ & $G_7*3$\\
\bottomrule
\end{tabular}
\end{fitbox}\end{center}
\paragraph{$G_7^4$}
\begin{center}\begin{fitbox}
\begin{tabular}[t]{@{}l@{\quad}l@{}}
\toprule
\multicolumn{2}{@{}l}{$\rho=1$}\\
array & construction\\
\midrule
$[6;1]$ & $G_7^3+2$\\
$[5;2]$ & $G_7^3+2$\\
$[4;3]$ & $G_7^3+2$\\
\bottomrule
\end{tabular}
\qquad
\begin{tabular}[t]{@{}l@{\quad}l@{}}
\toprule
\multicolumn{2}{@{}l}{$\rho=3$}\\
array & construction\\
\midrule
$[1,1,1;1,1,2]$ & $G_7+6$\\
$[2,2,2;2,2,4]$ & $G_7*2+4$\\
$[3,3,3;3,3,6]$ & $G_7*3+2$\\
$[4,4,4;4,4,8]$ & $G_7*4$\\
\bottomrule
\end{tabular}
\end{fitbox}\end{center}

\bigskip

{\bf Acknowledgments} The author is grateful to S.~V.~Avgustinovich for  discussions and pointing out the connection between  construction of the code of length \(3\) given in Section 3.2, to A.~Yu.~Vasil'eva for fruitful discussion of the material of the paper,  to V.~R.~Danilko for kindly providing multicore computing resources for solving the binary linear programming problems and A.~A.~Panin for recommending the HiGHS package for Python.

The results of the paper  were obtained in the pre-AI era. At the final part of the paper production the proofs  were cross-verified in the Lean language using the AI Aristotle agent: \href{https://aristotle.harmonic.fun}{aristotle.harmonic.fun}, which was also used to draw all the figures and  reformat raw data into tables.

\end{document}